\def\N{\mathbb{N}}
\def\Z{\mathbb{Z}}
\def\Zp{\Z_p}
\def\Zn{\Z_n}
\def\R{\mathbb{R}}
\def\RP{\R P}
\def\half{\frac{1}{2}}
\def\Ztwo{\Z_2}
\newcommand{\toby}[1]{\stackrel{#1}{\longrightarrow}}
\def\Lg{L\mathfrak{g}}
\def\Lghat{\widehat{\Lg}}
\def\LGhat{\widehat{LG}}
\def\Hol{\mathrm{Hol}}
\def\pr{\mathrm{pr}}
\newtheorem{thm}{Theorem}[section]
\newtheorem{lemma}[thm]{Lemma}
\newtheorem{remark}[thm]{Remark}
\newtheorem{defn}[thm]{Definition}
\newtheorem{fact}[thm]{Fact}
\newtheorem{prop}[thm]{Proposition}
\title{Pre-quantization of the Moduli Space of Flat $G$-Bundles over a Surface}
\author{Derek Krepski}
\begin{document}

\maketitle


\begin{abstract}
For a simply connected, compact, simple Lie group $G$, the moduli space of flat $G$-bundles over a closed surface $\Sigma$ is known to be pre-quantizable at integer levels.  For non-simply connected $G$, however, integrality of the level is not sufficient for pre-quantization, and this paper determines the obstruction --- namely a certain cohomology class in $H^3(G^2;\Z)$ --- that places further restrictions on the underlying level. The levels that admit a pre-quantization of the moduli space are determined explicitly for all non-simply connected, compact, simple Lie groups $G$.

 \end{abstract}

\section{Introduction}

Let $G$ be a compact, connected simple Lie group (not necessarily simply connected) with universal cover $\tilde{G}$, and let $\Sigma$ be an oriented compact surface of genus $g$ with one boundary component. The moduli space $M(\Sigma)\cong \mathrm{Hom}(\pi_1(\Sigma),G)$ of framed flat $G$-bundles over $\Sigma$ is  a smooth finite dimensional manifold.  $M(\Sigma)$ is naturally a quasi-Hamiltonian $\tilde{G}$-space \cite{AMM}, and this paper considers the pre-quantization of $M(\Sigma)$ (see Section \ref{sec:quasi}) .  To be specific, the definition of a quasi-Hamiltonian $\tilde{G}$-space involves a choice of a level $l>0$ --- a multiple of the basic inner product on the Lie algebra of $\tilde{G}$ --- and this work determines the levels $l$ for which the moduli space $M(\Sigma)$ is pre-quantizable.  The main result of this paper is that $M(\Sigma)$ is pre-quantizable if and only if the underlying level $l$ is an integer multiple of the integer $l_0$ listed in Table \ref{summary}. (When $G$ is simply connected, it is well known that $l_0=1$; therefore,  this paper is primarily concerned with $G$'s that are not simply connected.)

\begin{table}[ht]
\centering
\caption{The integer $l_0$ for non-simply connected $G$}
\begin{tabular}{|c|c|c|c|c|}
\hline

 $\tilde{G}$ & $Z(\tilde{G})$ & $\Gamma$ & $G$ &  $l_0$  \\
  \hline\hline
  \multirow{2}{*}{$SU(n)$, $n\geq 2$}   & \multirow{2}{*}{$\Zn$}  & $\Zn$ & $PSU(n)$  & $n$ \\
  		   &  &$\Z_l$, $1<l<n$& $SU(n)/\Z_l$  &  $\mathrm{ord}_l(\frac{n}{l})$\\
  \hline
  $Sp(n)$, $n\geq 1$   &$ \Ztwo$  & $\Ztwo$ & $PSp(n)$  & $1+\rho(n)$  \\
  \hline
    $Spin(n)$, $n\geq 7$ odd   & $\Ztwo$ &  $\Ztwo$ & $SO(n)$  &  $1$ \\
  \hline
  $Spin(2n)$, & \multirow{2}{*}{$\Z_4$} & $\Z_4$ & $PSO(2n)$  & $4$ \\
	$n\geq 5$	 odd &	& $\Ztwo$ & $SO(2n)$   &  $1$ \\
\hline
 \multirow{3}{*}{$Spin(4n)$, $n\geq 2$}  & \multirow{3}{*}{$\Ztwo \times \Ztwo$} & $ \Ztwo \times \Ztwo$ & $PSO(4n) $   & $2$ \\
  	  & &  $\Ztwo$ & $Ss(4n)$  &  $1 + \rho(n)$ \\
	  & & $\Ztwo$ & $SO(4n)$ & $1$\\
\hline
  $E_6$  & $\Z_3$ & $\Z_3$ &$PE_6$  & $3 $\\
  $E_7$  &  $\Ztwo$& $\Ztwo$ & $PE_7$   & $2 $\\
  \hline
\end{tabular}
\label{summary}
\end{table}

In Table 1,   $\Gamma=\pi_1(G)$, and hence $G=\tilde{G}/\Gamma$. Also,  $\rho:\Z \to \Ztwo$ is reduction mod $2$, and $\mathrm{ord}_l(x)$ denotes the order of $x$ mod $l$ in $\Z_l$ . (See  Section \ref{section:calculation} with regards to other notational ambiguities.)

A curiosity is that a similar table (Proposition 3.6.2 in \cite{TL}) appears in a paper by Toledano-Laredo in a different context. In \cite{TL}, Toledano-Laredo classifies irreducible positive energy representations of loop groups $LG$. The reason for the similarity in the tables is not yet understood.

The results in this paper extend  readily to the moduli space $M(\Sigma, \mathcal{C})$, the moduli space of flat $G$-bundles over $\Sigma$ with holonomy around $\partial\Sigma$ in the prescribed conjugacy class $\mathcal{C} \subset G$. That is, since $M(\Sigma,\mathcal{C})$ is the symplectic quotient of the fusion product $M(\Sigma) \circledast \mathcal{C}$, Propositions   \ref{prop:qeqpreq=reducedpreq} and \ref{prop:fusion} (which describe how pre-quantization interacts with   symplectic reduction and fusion) completely address the pre-quantization of $M(\Sigma,\mathcal{C})$ (see Remarks \ref{remark:reduction} and \ref{conjugacy}).

The plan of attack is to rephrase the problem of determining $l_0$ in terms of the cohomology of Lie groups.  Theorem \ref{thm:coho-calc-equiv} realizes the integer $l_0$ as the generator of $\ker \tilde\phi^* \subset \Z\cong H^3(\tilde{G};\Z)$, where  $\tilde\phi:G^2\to \tilde{G}$ is the canonical lift of the commutator map $\phi:G^2\to G$.  Using the classification of compact simple Lie groups, and known results about their cohomology rings (as Hopf algebras), the integer $l_0$ is then computed in Section \ref{section:calculation}.

It is not surprising that the integer $l_0$ is the result of a cohomology calculation, since this paper views pre-quantization in the framework of cohomology to begin with. As discussed in Section \ref{sec:quasi}, a pre-quantization of a quasi-Hamiltonian $G$-space is an \emph{integral lift} of a certain $G$-equivariant de Rham cohomology class that is associated to the quasi-Hamiltonian $G$-space (see Remark \ref{remark:pair}). This is analogous to the pre-quantization of an ordinary symplectic manifold $(M,\omega)$, where  a pre-quantization is usually defined as a complex line bundle over $M$, equipped with a connection whose curvature is the corresponding symplectic form $\omega$ (see \cite{GGK}).  By the well known correspondence between line bundles over $M$ and $H^2(M;\Z)$, a pre-quantization gives an integral lift of the de Rham cohomology class $[\omega]\in H^2_{dR}(M)$ determined by the symplectic form.   One may make similar analogies with the equivariant pre-quantization of Hamiltonian $G$-manifolds, $G$-equivariant cohomology classes, and $G$-equivariant line bundles as well; the interested reader is referred to \cite{GGK}.

As discussed in \cite{AMM}, the theory of quasi-Hamiltonian $G$-spaces is equivalent to the theory of Hamiltonian loop group spaces; therefore, pre-quantization may be interpreted in either setting. This  is considered in  Appendix \ref{sec:loopgroup}.
Pre-quantization may also be studied in the language of pre-quasi-symplectic groupoids --- see \cite{LX} for such a treatment. Also of interest is the related work \cite{FGK}.

\noindent\emph{Acknowledgements:} I would like to thank my advisors Eckhard Meinrenken and Paul Selick for their guidance, support, and patience. My sincere thanks to Lisa Jeffrey as well, for many helpful discussions.

\section{Preliminaries} \label{sec:quasi-intro}

This section is a review  of quasi-Hamiltonian $G$-spaces, and serves mainly to establish notation.  The reader should consult \cite{AMM} for details.

For a (not necessarily simple) compact Lie group $G$, choose an
invariant positive definite inner product $(\cdot,\cdot)$ on its
Lie algebra $\mathfrak{g}$, and let
$\eta=\frac{1}{12}(\theta^L,[\theta^L,\theta^L])$
$=\frac{1}{12}(\theta^R,[\theta^R,\theta^R]) \in \Omega^3(G)$ be
the canonical 3-form for this choice of inner product. Here
$\theta^L$ and $\theta^R$ are the left and right invariant
Maurer-Cartan forms, respectively. The group $G$ itself is considered as a
$G$-manifold, acting by conjugation.

\begin{defn}  \label{defn:qhspace} A quasi-Hamiltonian $G$-space
is a triple $(M,\omega,\phi)$ consisting of a $G$-manifold $M$,
an invariant 2-form $\omega$ on $M$, and an equivariant map
(called the moment map) $\phi:M\to G$ satisfying:
\begin{enumerate}
\item[(1)] $d\omega +\phi^*\eta =0$

\item[(2)] $\iota_{\xi^\sharp}\omega = -\half
\phi^*(\theta^L+\theta^R,\xi)$ for all $\xi\in\mathfrak{g}$

\item[(3)] At every point $p\in M$,
$\ker\omega_p \cap \ker{\mathrm{d}\phi|_p} = \{0\}$

\end{enumerate}
\end{defn}

\begin{remark} \label{remark:pair} Notice that condition (1) of Definition \ref{defn:qhspace} says
that the pair $(\omega,\eta)$ defines a cocycle of dimension $3$
in $\Omega^*(\phi)$, the algebraic mapping cone of
$\phi^*:\Omega^*(G)\to\Omega^*(M)$. Thus the pair determines a
cohomology class $[(\omega,\eta)]\in H^3(\phi\, ;\R)$.
In fact, conditions (1) and (2) above can be re-expressed in
terms of the Cartan model for equivariant differential forms on
$M$ (see \cite{GS}) . Specifically, (1) and (2) may be replaced by the single
relation
\[d_G\omega+\phi^*\eta_G =0 \] where $\omega$ is viewed as an
equivariant differential form and $\eta_G$ is the equivariant
differential form given by
$\eta_G(\xi)=\eta+\half(\theta^L+\theta^R,\xi)$ in the Cartan
model. Therefore conditions (1) and (2)  give an
equivariant cohomology class $[(\omega,\eta_G)]\in H^3_G(\phi\,
;\R)$.  This is the salient feature of the above definition, for the purposes of this paper. (In fact, this paper does not make use of condition (3) above.)
\end{remark}

\noindent \emph{N.B.} Throughout this paper, de Rham cohomology $H^*_{dR}(\quad)$ will be identified with singular cohomology with real coefficients $H^*(\quad;\R)$ via the canonical isomorphism.

 If $G$ is simple, the  invariant
inner product $(\cdot,\cdot)$ on $\mathfrak{g}$ is necessarily a multiple of the basic inner product $B$: the invariant inner product normalized to make short co-roots have length $\sqrt{2}$.  The canonical $3$-form $\eta$ associated to the basic inner product $B$ will be denoted $\eta_1$.

\begin{defn} \label{defn:qHlevels}
Let $G$ be a simple compact Lie group. The quasi-Hamiltonian $G$-space $(M,\omega,\phi)$ is said to be at \emph{level} $l>0$ if the invariant inner product chosen on $\mathfrak{g}$ is $l B$.
\end{defn}

The following two facts are included for reference.  Fact \ref{fact:fusion} describes how  the product of two quasi-Hamiltonian $G$-spaces is naturally a quasi-Hamiltonian $G$-space, and Fact \ref{fact:reduction} is the quasi-Hamiltonian analogue of Meyer-Marsden-Weinstein reduction in symplectic geometry.

\begin{fact}[Fusion] \label{fact:fusion} Let $(M_1,\omega_1,\phi_1)$ and $(M_2,\omega_2,\phi_2)$ be quasi-Hamiltonian $G$-spaces. Then $M_1\times M_2$ is a quasi-Hamiltonian $G$-space, with diagonal $G$-action, invariant $2$-form $\omega=\pr_1^*\omega_1 + \pr_2^*\omega_2 +
\half(\pr_1^*\phi_1^*\theta^L,\pr_2^*\phi_2^*\theta^R)$, and group-valued moment map $\phi=\mu\circ(\phi_1\times\phi_2)$, where $\mu:G^2\to G$ is group multiplication. This quasi-Hamiltonian $G$-space is called the \emph{fusion product}, and is denoted $(M_1\circledast M_2,\omega,\phi)$.
\end{fact}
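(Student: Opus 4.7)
The plan is to verify, in turn, each of the three conditions of Definition \ref{defn:qhspace} for the triple $(M_1 \times M_2, \omega, \phi)$. Equivariance of $\omega$ under the diagonal action and of $\phi = \mu\circ(\phi_1\times\phi_2)$ is immediate from the equivariance of the constituent data together with the fact that $\mu:G^2\to G$ intertwines diagonal conjugation on $G^2$ with conjugation on $G$.

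The central computation, which drives the proof of condition (1), is a Polyakov--Wiegmann type identity for $\mu^*\eta$. I would first establish, working on $G^2$ with the coordinate projections $\pr_1, \pr_2$, the identity
\[
\mu^*\eta \;=\; \pr_1^*\eta \;+\; \pr_2^*\eta \;+\; \half\, d\bigl(\pr_1^*\theta^L,\,\pr_2^*\theta^R\bigr),
\]
by expanding $\mu^*\theta^L = \mathrm{Ad}_{g_2^{-1}}\pr_1^*\theta^L + \pr_2^*\theta^L$ (and the analogous formula for $\theta^R$), substituting into the cubic expression defining $\eta$, and collecting terms using the Maurer--Cartan equation and invariance of $(\cdot,\cdot)$. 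Pulling back by $\phi_1\times\phi_2$ and using $d\omega_i + \phi_i^*\eta = 0$ for $i=1,2$ then assembles into $d\omega + \phi^*\eta = 0$.

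For condition (2), the fundamental vector field of the diagonal action decomposes as $\xi^\sharp = \xi^\sharp_1 + \xi^\sharp_2$. The contractions $\iota_{\xi^\sharp}\pr_i^*\omega_i$ contribute $-\half\,\pr_i^*\phi_i^*(\theta^L+\theta^R,\xi)$ by the moment map condition on each factor. The cross term $\half(\pr_1^*\phi_1^*\theta^L, \pr_2^*\phi_2^*\theta^R)$ is contracted with $\xi^\sharp$ using the standard identities $\iota_{\xi^\sharp}\theta^L = \mathrm{Ad}_{g^{-1}}\xi - \xi$ and $\iota_{\xi^\sharp}\theta^R = \xi - \mathrm{Ad}_g\xi$ for conjugation on $G$. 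Comparing the result with $\mu^*(\theta^L+\theta^R)$ computed from the expansions above, and using invariance of $(\cdot,\cdot)$ to cancel the $\mathrm{Ad}$-twisted pieces, all terms reassemble into $-\half\,\phi^*(\theta^L+\theta^R,\xi)$ as required.

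The main obstacle is condition (3), the minimal degeneracy condition. Given $(v_1, v_2)\in \ker \omega_{(p_1,p_2)}\cap \ker d\phi|_{(p_1,p_2)}$, I would contract $\omega$ with test vectors of the form $(u_1,0)$ and $(0,u_2)$ for arbitrary $u_i\in T_{p_i}M_i$. The cross-term's contribution may then be rewritten, via the moment map conditions on the $M_i$, as a correction by a fundamental vector field. Combined with the constraint $d\mu(d\phi_1(v_1), d\phi_2(v_2))=0$ --- which, through the explicit formula for $d\mu$, forces $d\phi_1(v_1)$ and $d\phi_2(v_2)$ to cancel after appropriate translation --- this reduces the hypothesis to $v_i\in \ker\omega_i\cap\ker d\phi_i=\{0\}$ on each factor. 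This bookkeeping, tracking how the cross term and the fundamental vector fields of the two factors interact, is the most delicate step; it runs parallel to the argument in \cite{AMM}.
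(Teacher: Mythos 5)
The paper does not actually prove this statement; it records it as a \emph{Fact} cited from \cite{AMM}, so there is no internal proof to compare against. That said, your outline follows the standard argument, and I can assess it on its own terms and against the Polyakov--Wiegmann identity the paper invokes later (in the proof of Proposition~\ref{prop:fusion}).

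There is a sign error that breaks your verification of condition~(1). You claim
\[
\mu^*\eta = \pr_1^*\eta + \pr_2^*\eta + \tfrac{1}{2}\,d\bigl(\pr_1^*\theta^L,\pr_2^*\theta^R\bigr),
\]
but in the conventions of this paper (and of \cite{AMM}) the correct identity is
\[
\mu^*\eta = \pr_1^*\eta + \pr_2^*\eta - \tfrac{1}{2}\,d\bigl(\pr_1^*\theta^L,\pr_2^*\theta^R\bigr),
\]
as the author uses explicitly in the proof of Proposition~\ref{prop:fusion}. The sign matters: with $\omega = \pr_1^*\omega_1+\pr_2^*\omega_2 + \tfrac{1}{2}(\pr_1^*\phi_1^*\theta^L,\pr_2^*\phi_2^*\theta^R)$ and $d\omega_i + \phi_i^*\eta = 0$, one gets
\[
d\omega + \phi^*\eta = \tfrac{1}{2}\,d\bigl(\pr_1^*\phi_1^*\theta^L,\pr_2^*\phi_2^*\theta^R\bigr) + \tfrac{1}{2}\,(\phi_1\times\phi_2)^*d\bigl(\pr_1^*\theta^L,\pr_2^*\theta^R\bigr),
\]
which vanishes only if the correction term in $\mu^*\eta$ carries a minus sign so as to cancel the exterior derivative of the cross term in $\omega$. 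With your stated sign the two exact terms add rather than cancel, and condition~(1) fails. Since you assert that your identity ``assembles into $d\omega + \phi^*\eta = 0$,'' the computation was either not carried out or was transcribed incorrectly; either way it is a step that would fail as written.

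Your treatment of conditions~(2) and~(3) is the right strategy --- decompose $\xi^\sharp$ as a sum of the factor vector fields and use the contractions $\iota_{\xi^\sharp}\theta^L=\mathrm{Ad}_{g^{-1}}\xi-\xi$, $\iota_{\xi^\sharp}\theta^R=\xi-\mathrm{Ad}_g\xi$ --- but for condition~(3) the sketch is too thin to count as a proof. The assertion that the constraint $d\mu(d\phi_1(v_1),d\phi_2(v_2))=0$ ``forces $d\phi_1(v_1)$ and $d\phi_2(v_2)$ to cancel after appropriate translation'' is not by itself enough: the decisive point (as in \cite{AMM}) is to show that $v=(v_1,v_2)\in\ker\omega\cap\ker d\phi$ implies each $v_i + (\text{fundamental vector})$ lies in $\ker\omega_i\cap\ker d\phi_i$ on its own factor, and this requires tracking precisely how the cross term interacts with the moment map conditions. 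This step deserves the explicit computation you allude to but do not perform.
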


\begin{fact}[Reduction] \label{fact:reduction} Let $(M,\omega,\phi)$ be a quasi-Hamiltonian $G$-space.
For any $h\in G$ that is a regular value, the centralizer $Z_h$ of $h$ acts locally freely on the level set $\phi^{-1}(h)$, and the restriction of $\omega$ to $\phi^{-1}(h)$ descends to a symplectic form on the orbifold $\phi^{-1}(h)/Z_h$. In particular, if the identity $e$ is a
regular value, and $j:\phi^{-1}(e)\to M$ denotes the inclusion, then $j^*\omega$ descends to a symplectic form
$\omega_{red}$ on $M/\!/G:=\phi^{-1}(e)/G$.
\end{fact}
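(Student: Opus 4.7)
The plan is to mimic Marsden--Weinstein--Meyer reduction, with conditions (1), (2), and (3) of Definition \ref{defn:qhspace} playing the roles of closedness of $\omega$, the moment map equation, and weak non-degeneracy, respectively. Set $j:\phi^{-1}(h)\hookrightarrow M$ to be the inclusion. Since $\phi$ is $G$-equivariant and $Z_h$ fixes $h$ under conjugation, $Z_h$ preserves $\phi^{-1}(h)$; regularity of $h$ makes $\phi^{-1}(h)$ a smooth submanifold. To show the $Z_h$-action on $\phi^{-1}(h)$ is locally free, I would argue that $\xi\in\mathfrak{z}_h$ with $\xi^\sharp_p=0$ at some $p\in\phi^{-1}(h)$ forces $\xi=0$: substituting into condition (2) yields $\phi^*(\theta^L+\theta^R,\xi)_p=0$, and using $\mathrm{Ad}_{h^{-1}}\xi=\xi$, invariance of $(\cdot,\cdot)$, and surjectivity of $d\phi_p$ (from regularity), one obtains $(Y,\xi)=0$ for every $Y\in\mathfrak{g}$, hence $\xi=0$.

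Next, I would show that $j^*\omega$ descends to a closed invariant 2-form $\omega_{red}$ on the orbifold $\phi^{-1}(h)/Z_h$. Invariance under $Z_h$ is immediate from $G$-invariance of $\omega$. Because $\phi\circ j\equiv h$ is constant, $j^*\phi^*$ annihilates every form on $G$; applying $j^*$ to condition (2) gives horizontality $\iota_{\xi^\sharp}(j^*\omega)=0$ for all $\xi\in\mathfrak{z}_h$, while applying $j^*$ to condition (1) gives closedness $d(j^*\omega)=-j^*\phi^*\eta=0$. Hence $\omega_{red}$ exists as a closed 2-form on $\phi^{-1}(h)/Z_h$.

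The main obstacle is the non-degeneracy of $\omega_{red}$, which is precisely what condition (3) is designed to secure. Suppose $v\in T_p\phi^{-1}(h)=\ker d\phi_p$ represents an element of $\ker(\omega_{red})_{[p]}$, i.e., $\omega_p(v,w)=0$ for all $w\in\ker d\phi_p$. Since $d\phi_p(v)=0$, condition (2) also yields $\omega_p(v,\xi^\sharp_p)=0$ for every $\xi\in\mathfrak{g}$, so $v$ is $\omega_p$-orthogonal to $\ker d\phi_p+\mathfrak{g}^\sharp_p$. A short linear-algebra argument, using surjectivity of $d\phi_p$ onto $T_hG$ and the explicit formula in condition (2), shows that any such $v$ decomposes as $v=\xi^\sharp_p+v_0$ with $\xi\in\mathfrak{z}_h$ and $v_0\in\ker\omega_p\cap\ker d\phi_p$; condition (3) forces $v_0=0$, so $v$ is tangent to the $Z_h$-orbit through $p$. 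This confirms non-degeneracy of $\omega_{red}$, and setting $h=e$ recovers the symplectic quotient $M/\!/G$ asserted in the statement.
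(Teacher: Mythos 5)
The paper does not actually prove Fact~\ref{fact:reduction}: it is quoted as a known result and the reader is directed to \cite{AMM} for the details, so there is no in-paper argument to compare against. Judged on its own terms, your plan of adapting Marsden--Weinstein--Meyer reduction, with conditions (1), (2), (3) of Definition~\ref{defn:qhspace} standing in for closedness, the moment map identity, and non-degeneracy, is the correct route (and is the one taken in \cite{AMM}). The local freeness step and the descent step (closedness from condition (1), horizontality from condition (2), both using that $\phi\circ j$ is constant) are both correct as written.

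The gap is in the non-degeneracy step. You invoke an unspecified ``short linear-algebra argument'' to produce a decomposition $v=\xi^\sharp_p+v_0$ with $\xi\in\mathfrak{z}_h$ and $v_0\in\ker\omega_p\cap\ker d\phi_p$, and then finish by using condition~(3) to kill $v_0$. Producing that decomposition is, however, the entire content of the non-degeneracy proof; it is not short, and it already requires condition~(3), so relegating (3) to eliminating $v_0$ at the very end misplaces where the minimal degeneracy hypothesis actually does its work. The standard chain is: first establish the structural identity $\ker\omega_p=\{\xi^\sharp_p:\mathrm{Ad}_{\phi(p)}\xi=-\xi\}$ (the non-trivial inclusion uses (3): for $v\in\ker\omega_p$, condition (2) shows $\theta^R(d\phi_p(v))=Y$ with $\mathrm{Ad}_{\phi(p)}Y=-Y$, and then $v-\tfrac{1}{2}Y^\sharp_p\in\ker\omega_p\cap\ker d\phi_p=\{0\}$); deduce $(\mathfrak{g}^\sharp_p)^{\omega_p}=\ker d\phi_p+\ker\omega_p$; take $\omega_p$-orthogonals to get $(\ker d\phi_p)^{\omega_p}=\mathfrak{g}^\sharp_p$; and only then does $\ker d\phi_p\cap(\ker d\phi_p)^{\omega_p}=\ker d\phi_p\cap\mathfrak{g}^\sharp_p=(\mathfrak{z}_h)^\sharp_p$ follow, since $\xi^\sharp_p\in\ker d\phi_p$ forces $(1-\mathrm{Ad}_h)\xi=0$. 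You should supply these intermediate identities explicitly rather than absorbing them into a phrase; as it stands, the heart of the argument is missing.
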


In analogy with co-adjoint orbits $\mathcal{O}\subset \mathfrak{g}^*$ ---  central examples in the theory of  Hamiltonian $G$-spaces ---  conjugacy classes $\mathcal{C}\subset G$ are  important examples of  quasi-Hamiltonian $G$-spaces.  The group-valued moment map is just the inclusion $\iota:\mathcal{C}\hookrightarrow G$.
Notice that $h$ is a regular value for the moment map $\phi:M\to G$ if and only if the identity $e\in G$ is a regular value for the moment map of the fusion product $\Phi=\mu\circ (\phi\times \iota):M\times \mathcal{C} \to G$, where $\mathcal{C}\subset G$ is the conjugacy class containing $h^{-1}$.  In this case, there is a canonical identification of quotients:
\[\phi^{-1}(h)/Z_h \cong  \Phi^{-1}(e)/G=(M\circledast \mathcal{C})/\!/G. \]

Finally, recall another main example of a quasi-Hamiltonian $G$-space: the double $\mathbf{D}(G):=G^2$ of a compact Lie group $G$.  In particular, there is a $G$-invariant 2-form $\omega$ such that $(G^2, \omega, \phi)$ is a quasi-Hamiltonian $G$-space with moment map $\phi(a,b)=aba^{-1}b^{-1}$.  Here $G$ acts by conjugation on each factor. Notice that  the moment map $\phi$ lifts canonically to $\tilde{G}$, the universal cover of $G$, and $G^2$ may be viewed (as it is in this work) as a quasi-Hamiltonian $\tilde{G}$-space.

$G^{2g}$, the $g$-fold fusion product of $\mathbf{D}(G)$, has a geometric interpretation that is discussed next for compact, connected, simple Lie groups $G$.  Let $\Sigma$ be a compact oriented surface of genus $g$ with one boundary component, and fix a basepoint $b\in \partial\Sigma$. 
The moduli space $M(\Sigma)$ is the set of equivalence classes of triples $(P,\theta,\psi)$, where $P \to \Sigma$ is a principal $G$-bundle equipped with a flat connection $\theta$ and $\psi: P|_b\cong G$ is a \emph{framing}.   Two such triples $(P_i,\theta_i, \psi_i)$ are equivalent if there is a gauge transformation $f:P_1\to P_2$ with $f^*\theta_2=\theta_1$, and $f^*\psi_2=\psi_1$.  Since $\Sigma$ is homotopy equivalent to a wedge of circles, every principal $G$-bundle over $\Sigma$ is trivial. Therefore $M(\Sigma)$ may also be viewed as the space of flat connections on $\Sigma\times G\to \Sigma$, up to based gauge transformations (i.e. gauge transformations $f$ whose restriction to $\{b\} \times G$ is the identity).  Holonomy defines  the natural identification $M(\Sigma)\cong \mathrm{Hom}(\pi_1(\Sigma),G)$, which may further be identified with $G^{2g}$ after choosing  generators of  $\pi_1(\Sigma)$.

The moment map $\phi_g:G^{2g}\to G$, $(a_1, b_1, \ldots, a_g, b_g)\mapsto \prod[a_i,b_i]$, may then be interpreted as sending an equivalence class of flat connections $[\theta]\in M(\Sigma)$ to its holonomy around the boundary $\partial\Sigma$. The level set $\phi_g^{-1}(e)$ is then simply the equivalence class of flat connections (up to based gauge transformations) whose holonomy around $\partial\Sigma$ is trivial.  Now, $M(\Sigma)$ is equipped with a $G\cong \mathcal{G}(\Sigma)/\mathcal{G}_b(\Sigma)$ action, where $\mathcal{G}_b(\Sigma)$, and $\mathcal{G}(\Sigma)$ denote the based and full gauge group, respectively.  Therefore, the symplectic quotient $\phi_g^{-1}(e)/G$ may be viewed as the moduli space of  flat connections on $\Sigma\times G \to \Sigma$ (up to gauge transformations) with trivial holonomy around $\partial\Sigma$.

Recall that the kernel $\Gamma$ of the universal covering homomorphism $\pi:\tilde{G}\to G$ is a finite central subgroup of $\tilde{G}$.  The canonical lift $\tilde\phi_g:G^{2g}\to \tilde{G}$ of the moment map $\phi_g$ may be interpreted as sending an equivalence class of connections $\theta$ to the holonomy of the $\tilde{G}$-connection $(1\times \pi)^*\theta$ around $\partial\Sigma$ for the $\tilde{G}$-bundle $\Sigma \times \tilde{G} \to \Sigma$.  It is easy to see that 
\begin{equation} \label{decomp}
\phi_g^{-1}(e)/G=\coprod_{z\in \Gamma} \tilde\phi_g^{-1}(z)/\tilde{G}
\end{equation}
which describes the connected components of  $M(\Sigma)/\!/G$.  (Note that the fibers $\tilde\phi_g^{-1}(z)$ are connected by Theorem 7.2 in \cite{AMM}.)

The decomposition (\ref{decomp}) is illuminating in the setting where the underlying surface has no boundary.
Indeed, let $\hat{\Sigma}$ be the surface obtained by attaching a disc $D$ to $\Sigma$ with attaching map $\partial D \toby{=}\partial \Sigma \subset \Sigma$.  Recall that there is a bijective correspondence between principal $G$-bundles over $\hat\Sigma$ and $\pi_1(G)\cong \Gamma$: every principal $G$-bundle over $\hat\Sigma$ can be constructed by gluing together trivial bundles over $\Sigma$ and $D$ with a transition function $S^1\!\!=\!\!\Sigma\cap D \to G$.
The moduli space of flat $G$-bundles over $\hat\Sigma$ (up to gauge transformation), denoted $M(\hat\Sigma)$, can thus be identified with the symplectic quotient $M(\Sigma)/\!/G$, and the above decomposition describes the components of $M(\hat\Sigma)$ in terms of the bundle types enumerated by $\Gamma$. That is,  $\tilde\phi_g^{-1}(z)/\tilde{G}$ may be thought of as the moduli space of flat connections on $P \to \hat\Sigma$ (up to gauge transformation) where $P \to \hat\Sigma$ is a principal $G$-bundle corresponding to $z\in \Gamma$.

Similarly, the symplectic quotients $\phi_g^{-1}(h)/Z_h\cong (M(\Sigma)\circledast \mathcal{C})/\!/G$ at other regular values $h$ may be viewed as the moduli space of flat connections whose holonomy around $\partial\Sigma$ lies in the conjugacy class $\mathcal{C}$ containing $h^{-1}$.  And similar to (\ref{decomp}), there is the decomposition
\begin{equation} \label{decomp2}
 (M(\Sigma)\circledast \mathcal{C})/\!/G = \coprod_j (M(\Sigma)\times \tilde{\mathcal{C}}_j)/\tilde{G}
\end{equation}
where $\tilde{\mathcal{C}}_j\subset \tilde{G}$ are the conjugacy classes that cover $\mathcal{C}\subset G$.


\section{Pre-quantization of Quasi-Hamiltonian $G$-spaces} \label{sec:quasi}

This section addresses pre-quantization in the context of quasi-Hamiltonian $G$-spaces. As mentioned in the introduction, one may wish to compare this with the situation for ordinary symplectic manifolds, and Hamiltonian $G$-manifolds (see \cite{GGK}).

\begin{defn} \label{preq}
A pre-quantization of a quasi-Hamiltonian $G$-space $(M,\omega,\phi)$ is an integral lift of $[(\omega,\eta_G)]\in H^3_G(\phi;\R)$.  That is, a pre-quantization is a cohomology class $\alpha\in H^3_G(\phi;\Z)$ satisfying $\iota_\R(\alpha)=[(\omega,\eta_G)]$, where $\iota_\R:H^*_G(\quad;\Z)\to H^*_G(\quad;\R)$ is the coefficient homomorphism.
\end{defn}

\begin{remark} There is a geometric interpretation of Definition \ref{preq}, in terms of \emph{relative gerbes} and \emph{quasi-line bundles} (see \cite{Sh}) that is analogous to the situation for symplectic manifolds, where pre-quantization is defined as a line bundle \cite{GGK}.
\end{remark}

Proposition \ref{prop:eq-pqispq} below shows that a pre-quantization may  be viewed as an ordinary cohomology class when $G$ is  simply connected.

\begin{lemma} \label{lemma:simplyG1} Let $G$ be  simply connected, and let $X$ be
some $G$-space. Then (with any coefficients) $H^i_G(X)\cong
H^i(X)$ for $i=1,2$, and the canonical map $H^3_G(X)\to H^3(X)$ is
injective.
\end{lemma}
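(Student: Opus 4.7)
The plan is to apply the Leray--Serre spectral sequence of the Borel fibration $X \hookrightarrow X_G \to BG$, where $X_G := EG \times_G X$ computes equivariant cohomology as $H^*_G(X) = H^*(X_G)$. Since $G$ is connected, $BG$ is simply connected, so the local coefficient system on $BG$ is trivial, and for any abelian coefficient group one gets
\[ E_2^{p,q} = H^p(BG;\, H^q(X)) \Longrightarrow H^{p+q}_G(X). \]

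The crucial input is that $BG$ is $3$-connected. Indeed, $\pi_1(BG) = \pi_0(G) = 0$ by connectedness, $\pi_2(BG) = \pi_1(G) = 0$ by hypothesis, and $\pi_3(BG) = \pi_2(G) = 0$ by the classical fact (E.\ Cartan) that $\pi_2$ vanishes for every Lie group. By Hurewicz and universal coefficients this forces $H^p(BG; A) = 0$ for $p = 1, 2, 3$ and every coefficient group $A$. Consequently the columns $p = 1, 2, 3$ of the $E_2$-page vanish identically.

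For total degrees $n = 1, 2$, the only possibly nonzero entry on the diagonal $p + q = n$ is $E_2^{0,n} = H^n(X)$. Every outgoing differential from $E_r^{0,n}$ lands in a vanishing column (column $2$ or column $3$), so $E_\infty^{0,n} = E_2^{0,n}$; the filtration on $H^n_G(X)$ collapses and the edge homomorphism realizes the isomorphism $H^n_G(X) \cong H^n(X)$ as the canonical restriction to the fiber. For $n = 3$, the off-diagonal entries $E_2^{p, 3-p}$ vanish for $p = 1, 2, 3$, so again the filtration collapses to $H^3_G(X) \cong E_\infty^{0,3}$, while $E_\infty^{0,3}$ is merely a subgroup of $E_2^{0,3} = H^3(X)$ (possibly proper, since $d_4 : E_4^{0,3} \to E_4^{4,0} = H^4(BG; H^0(X))$ need not vanish). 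The edge homomorphism factors as the composition of this isomorphism with the inclusion $E_\infty^{0,3} \hookrightarrow H^3(X)$, hence is injective.

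The only substantive input is the vanishing $\pi_2(G) = 0$ for any Lie group; everything else is a formal consequence of the Serre spectral sequence of the Borel fibration, so there is no real obstacle.
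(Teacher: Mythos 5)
Your proof is correct and takes essentially the same approach as the paper: the Serre spectral sequence of the Borel fibration $X \to X_G \to BG$, with the key input that $H^p(BG) = 0$ for $p = 1, 2, 3$ (the paper states this directly, while you spell out the underlying reason $\pi_2(G) = 0$). The edge-homomorphism analysis matches the paper's argument.
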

\begin{proof} Consider the Serre spectral sequence for
the fibration $X\toby{\pi} X_G\to BG$ where $X_G$ denotes the
Borel construction. As $G$ is simply connected, $H^i(BG)=0$ for
$i=1,2,3$. Therefore the first non-zero differential (i.e. the
transgression) is $d:H^3(X)\to H^4(BG)$, and therefore
$H^i(X)=H^i(X_G)$ for $i=1,2$, and $\pi^*:H^3(X_G)\to H^3(X)$ is
injective. \end{proof}

\begin{lemma} \label{lemma:simplyG2} If $G$ is simply
connected, then (with any coefficients) $H_G^3(G)\cong H^3(G)$.
\end{lemma}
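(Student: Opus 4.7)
Applying Lemma~\ref{lemma:simplyG1} with $X = G$ gives the injectivity of the edge map $H^3_G(G) \to H^3(G)$, and reduces the proof to showing surjectivity. In the Serre spectral sequence for the Borel fibration $G \hookrightarrow G_G \toby{p} BG$ studied in that proof, surjectivity of the edge map is equivalent to the vanishing of the only surviving candidate differential, the transgression
\[ d: E_4^{0,3} = H^3(G) \longrightarrow E_4^{4,0} = H^4(BG). \]
All other potential differentials into or out of $E_r^{0,3}$ vanish by dimension, using $H^i(BG) = 0$ for $1 \le i \le 3$ and $H^i(G) = 0$ for $i = 1, 2$; the remaining entries on the anti-diagonal $p+q=3$ are also zero, so once $d = 0$ one immediately concludes $H^3_G(G) \cong E_\infty^{0,3} = H^3(G)$.

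To show the transgression vanishes, the plan is to produce a section of $p$. Since the identity $1 \in G$ is fixed by the conjugation action, the assignment $s([e]) = [e, 1]$ is a well-defined continuous map $s: BG \to G_G$ with $p \circ s = \mathrm{id}_{BG}$: indeed, $(e \cdot h, 1) \sim (e, h \cdot 1 \cdot h^{-1}) = (e, 1)$ in $G_G$, independent of the representative. Consequently $p^*: H^*(BG; R) \to H^*(G_G; R)$ is split injective for any coefficient ring $R$. Since $p^*$ factors as $H^*(BG) = E_2^{*,0} \twoheadrightarrow E_\infty^{*,0} \hookrightarrow H^*(G_G)$, this injectivity forces $E_\infty^{r,0} = E_2^{r,0}$ for all $r$, so no incoming differential into the base row can have nonzero image. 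In particular $d = 0$.

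The argument has no substantive obstacle: the entire content is the observation that conjugation fixes the identity, which supplies the ``constant-loop'' section $s$ and hence kills all transgressions into the base row; everything else is spectral-sequence bookkeeping layered on top of Lemma~\ref{lemma:simplyG1}.
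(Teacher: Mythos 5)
Your proof is correct, but it takes a genuinely different route from the paper. The paper works with the principal $G$-bundle $G \to G\times EG \toby{p} G_G$, observes that the fiber inclusion $j:G\to G\times EG$, $g\mapsto g\cdot(e,*)=(e,g\cdot *)$, factors through the contractible space $EG$ and is therefore null homotopic, deduces from the homotopy long exact sequence that $p_*$ is an isomorphism on $\pi_k$ for $k\leq 3$, and then invokes Hurewicz (both spaces being $2$-connected) plus universal coefficients to pass to $H^3$. You instead stay in the Serre spectral sequence for the Borel fibration $G \to G_G \to BG$, as in Lemma~\ref{lemma:simplyG1}, and kill the one remaining transgression $d_4:E_4^{0,3}\to E_4^{4,0}$ by exhibiting a section $s:BG\to G_G$ coming from the conjugation-fixed point $1\in G$; split injectivity of $p^*$ then freezes the base row and forces $d_4=0$. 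Both approaches are sound; yours has the virtue of working verbatim with arbitrary coefficients (the paper's Hurewicz step gives $\Z$ and needs a universal-coefficients remark to cover the general case) and of extracting the entire lemma from a single elementary observation --- a $G$-fixed point always sections the Borel fibration --- whereas the paper's argument, while avoiding the spectral sequence at this step, ends up routing through homotopy groups and a slightly delicate identification of the connecting map with $j_*$.
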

\begin{proof}  From the long exact sequence of homotopy groups for the fibration  $G \times EG \toby{p} G_G \to BG$, it is clear that $p$ (the quotient map in the Borel construction) is an isomorphism on $\pi_k(\quad)$ for $k=1$ and $2$, and a surjection for $k=3$.  In particular $\pi_1(G_G)=\pi_2(G_G)=0$. Now, the connecting homomorphism $\partial:\pi_4(BG)\to\pi_3(G)$ may be viewed as  the induced homomorphism on  $\pi_3(\quad)$ of the map $j:G\to G\times EG$, which includes $G$ as the fiber of $p$. But $j$ is null homotopic, as it sends $g\in G$ to $g\cdot(e,*)=(e,g\cdot *)$, thus factoring through the contractible space $EG$. Therefore $j$ induces the zero map on $\pi_3(\quad)$ (i.e. $\partial=0$), and hence $p$ induces an isomorphism on $\pi_3(\quad)$ as well. Therefore by the Hurewicz Theorem, $p$ induces an isomorphism on $H_3(\quad;\Z)$, and therefore on $H^3(\quad;\Z)$ as well.
 \end{proof}

\begin{prop} \label{prop:eq-pqispq} Suppose $G$ is simply connected, and
$(M,\omega,\phi)$ is a quasi-Hamiltonian $G$-space. Then
$(M,\omega,\phi)$ admits a pre-quantization if and only if
the cohomology class $[(\omega,\eta)]\in H^3(\phi;\R)$ is integral (i.e. in the image of the coefficient homomorphism $\iota_\R:H^3(\phi;\Z) \to H^3(\phi;\R)$).
\end{prop}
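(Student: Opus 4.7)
The plan is to show that under the hypothesis that $G$ is simply connected, the forgetful map $H^3_G(\phi;\Lambda)\to H^3(\phi;\Lambda)$ is an isomorphism for any coefficient ring $\Lambda$ (in particular, for $\Lambda=\Z$ and $\Lambda=\R$). Once this is established, the equivariant integrality condition in Definition \ref{preq} reduces to the non-equivariant integrality of $[(\omega,\eta)]$, because the forgetful map sends the equivariant class $[(\omega,\eta_G)]$ to $[(\omega,\eta)]$ (setting the Cartan-model variable $\xi=0$ kills the polynomial part of $\eta_G$), and commutes with the coefficient homomorphism $\iota_\R$.

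To prove that $H^3_G(\phi;\Lambda)\cong H^3(\phi;\Lambda)$, I would use the long exact sequence associated to the algebraic mapping cone of $\phi^*:\Omega^*(G)\to \Omega^*(M)$ (and its equivariant analogue), which in cohomology reads
\begin{equation*}
\cdots \to H^{i-1}(M;\Lambda)\to H^i(\phi;\Lambda) \to H^i(G;\Lambda) \to H^i(M;\Lambda) \to \cdots
\end{equation*}
and likewise with $G$-equivariant cohomology throughout. The forgetful maps $H^*_G\to H^*$ fit these two sequences together into a commutative ladder. Applying Lemma \ref{lemma:simplyG1} to $X=M$ yields that the forgetful maps $H^i_G(M;\Lambda)\to H^i(M;\Lambda)$ are isomorphisms for $i=1,2$ and injective for $i=3$. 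Applying Lemma \ref{lemma:simplyG1} to $X=G$ (equipped with the conjugation action) gives the same statement for $M$ replaced by $G$ in degrees $1,2$, and Lemma \ref{lemma:simplyG2} upgrades the $i=3$ statement for $G$ to an isomorphism.

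With these inputs, the relevant five terms of the ladder are
\begin{equation*}
\begin{array}{ccccccccc}
H^2_G(G;\Lambda) & \to & H^2_G(M;\Lambda) & \to & H^3_G(\phi;\Lambda) & \to & H^3_G(G;\Lambda) & \to & H^3_G(M;\Lambda) \\
\downarrow \cong & & \downarrow \cong & & \downarrow & & \downarrow \cong & & \downarrow \mathrm{inj.} \\
H^2(G;\Lambda) & \to & H^2(M;\Lambda) & \to & H^3(\phi;\Lambda) & \to & H^3(G;\Lambda) & \to & H^3(M;\Lambda)
\end{array}
\end{equation*}
and the five lemma (in the form: iso, iso, ?, iso, injective $\Rightarrow$ iso) identifies the middle vertical as an isomorphism. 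The main step is thus the purely algebraic assembly of the previous two lemmas into this ladder; the compatibility of the mapping-cone long exact sequences with the forgetful functor is routine.

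Finally, putting coefficient homomorphisms on both sides gives a commutative square
\begin{equation*}
\begin{array}{ccc}
H^3_G(\phi;\Z) & \stackrel{\cong}{\longrightarrow} & H^3(\phi;\Z) \\
\iota_\R \downarrow & & \downarrow \iota_\R \\
H^3_G(\phi;\R) & \stackrel{\cong}{\longrightarrow} & H^3(\phi;\R)
\end{array}
\end{equation*}
in which the class $[(\omega,\eta_G)]$ on the lower-left corresponds to $[(\omega,\eta)]$ on the lower-right. Hence $[(\omega,\eta_G)]$ lies in the image of the left-hand $\iota_\R$ if and only if $[(\omega,\eta)]$ lies in the image of the right-hand $\iota_\R$, which is exactly the claim of the proposition. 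I expect no serious obstacle here beyond the careful bookkeeping of the mapping-cone sequences and verifying that the forgetful map really does send $\eta_G$ to $\eta$ in cohomology.
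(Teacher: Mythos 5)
Your proposal is correct and follows the same strategy as the paper's proof, which invokes Lemma \ref{lemma:simplyG1} (with $X=M$ and $X=G$), Lemma \ref{lemma:simplyG2}, and the five lemma to conclude $H^3_G(\phi)\cong H^3(\phi)$. You have simply supplied the details --- the commutative ladder of mapping-cone long exact sequences, the precise form of the five lemma, and the compatibility with the coefficient homomorphism --- that the paper compresses into ``easy applications.''
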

\begin{proof} Easy applications of Lemma
\ref{lemma:simplyG1} with $X=M$ and $X=G$, Lemma
\ref{lemma:simplyG2}, and the five-lemma show that
$H^3_G(\phi)\cong H^3(\phi)$. \end{proof}

Under certain circumstances, the symplectic quotient of a quasi-Hamiltonian $G$-space will be a smooth symplectic manifold, and one may ask whether a pre-quantization \emph{descends} to a pre-quantization  of the symplectic quotient. (Recall that a pre-quantization of a symplectic manifold $(M,\omega)$ may be defined as a cohomology class $\alpha\in H^2(M;\Z)$ with $\iota_\R(\alpha)=[\omega]$ --- see \cite{GGK}.)

\begin{prop} \label{prop:qeqpreq=reducedpreq} Let $(M,\omega,\phi)$ be a quasi-Hamiltonian
$G$-space, and suppose the identity $e\in G$ is a regular value for the moment map $\phi$.  If $(M,\omega,\phi)$ admits a pre-quantization, then the cohomology class $[j^*\omega] \in H^2_G(\phi^{-1}(e);\R)$ is integral.
\end{prop}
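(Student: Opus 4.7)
The plan is to push the pre-quantization $\alpha \in H^3_G(\phi; \Z)$ forward to an integer class in $H^2_G(\phi^{-1}(e); \Z)$ whose real reduction is $[j^*\omega]$. The starting point is the restriction diagram
\[
\begin{array}{ccc}
\phi^{-1}(e) & \stackrel{j}{\longrightarrow} & M \\
\phi' \downarrow & & \downarrow \phi \\
\{e\} & \stackrel{\iota}{\longrightarrow} & G,
\end{array}
\]
which, by naturality of the algebraic mapping cone, induces a chain map $\Omega_G^*(\phi) \to \Omega_G^*(\phi')$ (and an analogous map on singular cochains). It sends the cocycle $(\omega, \eta_G)$ to $(j^*\omega, \iota^*\eta_G) = (j^*\omega, 0)$: both the 3-form $\eta$ and the Cartan correction $\frac{1}{2}(\theta^L + \theta^R, \xi)$ have positive form-degree, so they pull back to zero on the point $\{e\}$. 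Applied to $\alpha$, this produces a class $\alpha' \in H^3_G(\phi'; \Z)$ whose image under $\iota_\R$ is represented by $(j^*\omega, 0)$.

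The second step is to identify $H^3_G(\phi'; R)$ with $H^2_G(\phi^{-1}(e); R)$ via the long exact sequence of the cone of $\phi'$,
\[
H^2(BG; R) \to H^2_G(\phi^{-1}(e); R) \to H^3_G(\phi'; R) \to H^3(BG; R).
\]
For a compact, connected, simply connected Lie group $G$ one has $\pi_2(G) = 0$, so $BG$ is 3-connected and the outer terms vanish for $R = \Z, \R$. Under the resulting isomorphism $[(j^*\omega, 0)]$ corresponds to $[j^*\omega]$, so $\alpha'$ yields the integer lift asserted and $[j^*\omega]$ is integral.

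The step that needs the most care is the chain-level construction of the restriction map in integer coefficients, where $j^*\phi^*\eta_G$ is not literally zero on singular cochains; what is required is that it be a coboundary in $C^3(\phi^{-1}(e); \Z)$, and this follows from the regular-value hypothesis $T_p\phi^{-1}(e) \subset \ker d\phi_p$ (giving vanishing as a de Rham form) combined with the factorization of $\phi \circ j$ through a point. For non-simply connected $G$ the vanishing $H^3(BG; \Z) = 0$ fails --- indeed $H^3(BG; \Z) \cong \pi_1(G)$ --- and an additional argument is needed to show the image of $\alpha'$ in $H^3(BG; \Z)$ is zero; the most natural route is the geometric interpretation of $\alpha$ as a relative gerbe that trivializes canonically at the identity element.
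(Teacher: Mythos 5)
Your approach is essentially the same as the paper's: restrict the pre-quantization class along the commutative square to get $\alpha'\in H^3_G(\phi';\Z)$ (where $\phi':\phi^{-1}(e)\to\{e\}$ is the constant map), then transfer it to $H^2_G(\phi^{-1}(e);\Z)$ via the long exact sequence of the mapping cone. Two remarks.

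First, the concern you flag about the chain-level construction with integer coefficients is misplaced. The square
\[
\begin{array}{ccc}
\phi^{-1}(e) & \stackrel{j}{\longrightarrow} & M \\
\downarrow & & \downarrow \\
\{e\} & \longrightarrow & G
\end{array}
\]
commutes on the nose (since $\phi\circ j$ is the constant map at $e$), so it induces a chain map of mapping cones with any coefficients; no vanishing or coboundary claim about $j^*\phi^*\eta_G$ is needed. What one verifies is only the real-coefficient statement $\iota_\R(\alpha')=[(j^*\omega,0)]$, and this follows by naturality of $\iota_\R$ together with the de Rham computation that $\iota_e^*\eta_G=0$ (there are no non-zero forms of positive form-degree on a point). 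The regular-value hypothesis plays no role in this step; it is there to ensure $\phi^{-1}(e)$ is a manifold so that $[j^*\omega]$ makes sense.

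Second, your observation about non-simply connected $G$ is a genuine subtlety, but it highlights a difference from the paper rather than an outright gap in your argument. The paper handles general compact $G$ by asserting that $H^3_G(\phi')\cong H^2_G(\phi^{-1}(e))\oplus H^3_G(pt)$ splits canonically with any coefficients and then projecting onto the first summand; your route instead invokes the vanishing $H^2(BG;R)=H^3(BG;R)=0$, valid for $R=\Z$ and $\R$ once $G$ is simply connected. The two arguments coincide in the simply connected case (where $H^3_G(pt;\Z)=0$ makes the paper's splitting trivial), which is the only case the paper actually uses (the proposition is applied to $\tilde G$-spaces). For the record, for the final uniqueness step you really use $H^2(BG;\R)=0$, which holds for any compact connected $G$, so the only place simple connectedness enters is the lifting of $\alpha'$ past $H^3(BG;\Z)$ --- exactly where you located it.
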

\begin{proof} There is a canonical map $H^3_G(\phi) \to H^2_G(\phi^{-1}(e))$ (with any coefficients) given by the composition of the map induced by restriction $H_G^3(\phi) \to H_G^3(\phi|_{\phi^{-1}(e)})$ and the projection onto the first summand $$H_G^3(\phi|_{\phi^{-1}(e)}) \cong H^2_G(\phi^{-1}(e))\oplus H_G^3(pt) \to H^2_G(\phi^{-1}(e)).$$ In other words, there is a diagram:
\[
\begin{array}{ccccc}
H^2_G(M) 	& \longrightarrow & H_G^3(\phi) 	& \longrightarrow & H_G^3(G) \\
	\downarrow  j^*		&	   &\downarrow	 	&	&\downarrow \\
	H^2_G(\phi^{-1}(e))	& \to& H_G^3(\phi|_{\phi^{-1}(e)})& \to & H^3_G(pt)\\
\end{array} \]

\noindent in which the bottom row is (canonically) split exact with any coefficients. Since $H^3_G(pt;\R)=0$ it suffices to check that (with real coefficients) the middle map sends the relative cohomology class $[(\omega,\eta_G)]$ to $[(j^*\omega,0)]$, which is clear.
\end{proof}

\begin{remark} \label{remark:reduction} If in the previous proposition $G$ acts freely on the level set $\phi^{-1}(e)$, one finds that the symplectic quotient $M/\!/G=\phi^{-1}(e)/G$ is prequantizable, since $H^2_G(\phi^{-1}(e)) \cong H^2(M/\!/G)$, and therefore the cohomology class $[\omega_{red}]$ is integral.  Said differently, the previous proposition
says that if $(M,\omega,\phi)$ is a pre-quantized quasi-Hamiltonian $G$-space  and $e\in G$ is a regular value, then  there is  a $G$-equivariant pre-quantum line bundle over the level set $\phi^{-1}(e)$ (see \cite{GGK}). And under the additional hypothesis that $G$ acts freely on $\phi^{-1}(e)$,  this $G$-equivariant line bundle descends to a pre-quantum line bundle over the symplectic quotient.  Of course, if $G$ only acts locally-freely on $\phi^{-1}(e)$, one obtains instead an \emph{orbi-bundle} over the symplectic quotient. (The reader may wish to consult \cite{MS} for a more thorough account of pre-quantization of singular spaces.)
 \end{remark}

Proposition \ref{prop:fusion}  shows that the fusion product $M_1\circledast M_2$ of two pre-quantizable quasi-Hamiltonian $G$-spaces is pre-quantizable.  In fact, the proof of the proposition shows that the pre-quantization of $M_1\circledast M_2$ is canonically obtained from the pre-quantizations of $M_1$ and $M_2$.

\begin{prop} \label{prop:fusion} Let $G$ be simply connected. If $(M_1,\omega_1,\phi_1)$ and $(M_2,\omega_2,\phi_2)$
are pre-quantized quasi-Hamiltonian $G$-spaces, then their fusion product $(M_1 \circledast M_2,\omega,\phi)$ inherits a pre-quantization.
\end{prop}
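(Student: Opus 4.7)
By Proposition \ref{prop:eq-pqispq}, since $G$ is simply connected, pre-quantizability of the fusion product is equivalent to integrality of the relative de Rham class $[(\omega,\eta)]\in H^3(\phi;\R)$. So my plan is to construct an integral cocycle in $H^3(\phi;\Z)$ representing this class, built from the given integral pre-quantizations $\alpha_i\in H^3(\phi_i;\Z)$ of the factors.

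First I would unwind the fusion formulas of Fact \ref{fact:fusion}, writing $\omega=\pr_1^*\omega_1+\pr_2^*\omega_2+(\phi_1\times\phi_2)^*\varpi$, where $\varpi=\half(p_1^*\theta^L,p_2^*\theta^R)\in\Omega^2(G^2)$ and $p_i:G^2\to G$ denote the two projections. The key identity, which is essentially the existence calculation for the fusion product carried out in \cite{AMM}, is
\[ \mu^*\eta = p_1^*\eta+p_2^*\eta - d\varpi \qquad \text{on } G^2. \]

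Second I would exploit the commutative square arising from $\phi=\mu\circ(\phi_1\times\phi_2)$ (with the identity on $M=M_1\times M_2$) to relate the two relative cohomologies. Pullback by $\mu$ induces a chain map $\mu^\sharp:\Omega^*(\phi)\to\Omega^*(\phi_1\times\phi_2)$ on mapping cones, and a short computation using the identity above shows that $\mu^\sharp[(\omega,\eta)]=[(\pr_1^*\omega_1+\pr_2^*\omega_2,\, p_1^*\eta+p_2^*\eta)]$ in $H^3(\phi_1\times\phi_2;\R)$. The right-hand side is the de Rham image of the manifestly integral class $\pr_1^*\alpha_1+\pr_2^*\alpha_2\in H^3(\phi_1\times\phi_2;\Z)$, so $\mu^\sharp[(\omega,\eta)]$ is integral.

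The hard part will be concluding that $[(\omega,\eta)]$ itself (not merely its image under $\mu^\sharp$) admits an integral lift. I would tackle this by a diagram chase between the long exact sequences of the mapping cones of $\phi$ and of $\phi_1\times\phi_2$, connected by $\mu^*$ on the $H^*(G)$-side and the identity on the $H^*(M)$-side. The crucial input is that $\mu^*:H^3(G;\Z)\to H^3(G^2;\Z)$ is split injective: for $G$ simply connected, compact, simple, one has $H^1(G)=H^2(G)=0$, so Künneth gives $H^3(G^2;\Z)\cong H^3(G;\Z)\oplus H^3(G;\Z)$, and primitivity of the generator of $H^3(G;\Z)$ identifies $\mu^*$ with the diagonal $x\mapsto(x,x)$. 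Combined with the fact that $H^3(G;\Z)\cong\Z$ is torsion free, a five-lemma style argument promotes the integral lift of $\mu^\sharp[(\omega,\eta)]$ in $H^3(\phi_1\times\phi_2;\Z)$ to an integral lift of $[(\omega,\eta)]$ in $H^3(\phi;\Z)$, canonically assembled from $\alpha_1$ and $\alpha_2$ as claimed in the remark preceding the proposition.
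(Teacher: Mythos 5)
Your approach is viable and genuinely different from the paper's. The paper works at the cochain level with smooth singular cochains: it chooses an integral representative $\bar\eta$ of $[\eta]$, uses primitivity to produce a transgressing cochain $\tau$ with $d\tau = \mu^*\bar\eta - \pr_1^*\bar\eta - \pr_2^*\bar\eta$, and writes down the relative cocycle $(\pr_1^*\bar\omega_1 + \pr_2^*\bar\omega_2 - (\phi_1\times\phi_2)^*\tau, \bar\eta)$ explicitly; this has the advantage of making the dependence on the input pre-quantizations manifest and of handling well-definedness by hand. You instead work at the level of cohomology, comparing the two mapping cones via the chain map $\mu^\sharp:(\alpha,\beta)\mapsto(\alpha,\mu^*\beta)$ induced by $\phi=\mu\circ(\phi_1\times\phi_2)$ and doing a diagram chase in the long exact sequences. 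Both routes hinge on primitivity of the generator of $H^3(G;\Z)$, but yours gets the existence statement without ever leaving cohomology, at the cost of burying the canonicity claim from the paper's preceding remark.

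However, the diagram chase as you've sketched it has a gap that needs to be filled. Because $H^2(G)=H^2(G^2)=0$, the long exact sequences of $\phi$ and $\phi_1\times\phi_2$ degenerate to short exact sequences
\[0\to H^2(M;\Z)\to H^3(\phi;\Z)\to \ker\phi^*\to 0,\qquad 0\to H^2(M;\Z)\to H^3(\phi_1\times\phi_2;\Z)\to \ker(\phi_1\times\phi_2)^*\to 0,\]
with $\mu^\sharp$ the identity on $H^2(M;\Z)$ and $\mu^*$ on the right. To lift $\beta:=\pr_1^*\alpha_1+\pr_2^*\alpha_2$ across $\mu^\sharp_\Z$ you must verify that the image of $\beta$ in $H^3(G^2;\Z)\cong\Z\oplus\Z$ lies on the diagonal, i.e.\ in $\mathrm{Im}\,\mu^*$. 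That image is $(l_1,l_2)$, where $l_i$ is the image of $\alpha_i$ in $H^3(G;\Z)$, and you need $l_1=l_2$. This does hold, precisely because both $\alpha_i$ are integral lifts of classes restricting to the \emph{same} $[\eta]\in H^3(G;\R)$ (the two factors share the underlying level), but it is a substantive input to the chase and should be stated. Once that is noted, $\mu^\sharp_\Z$ is injective (snake lemma), so the lift $\alpha$ is unique, and injectivity of $\mu^\sharp_\R$ (same argument) forces $\iota_\R\alpha=[(\omega,\eta)]$. As a side remark, the torsion-freeness of $H^3(G;\Z)$ that you invoke plays no role here; the working hypotheses are $H^2(G)=H^2(G^2)=0$, injectivity of $\mu^*$ on $H^3$, and the level-matching $l_1=l_2$.
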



\begin{proof} 
First, recall that for a manifold $X$, the sub-complex of smooth singular cochains $S_{sm}^*(X;\Z)$ is chain homotopy equivalent to the singular cochain complex $S^*(X;\Z)$. Both $S^*_{sm}(X;\Z)$ and the de Rham complex of differential forms on $X$ may be viewed  as  sub-complexes of $S^*_{sm}(X;\R)$ \cite{Ma}.

 By Proposition \ref{prop:eq-pqispq} it suffices to construct a cohomology class $[\alpha]\in S^3_{sm}(\phi;\Z)$ such that $\iota_\R([\alpha])=[(\omega,\eta)]$, where  $\omega=\pr_1^*\omega_1 + \pr_2^*\omega_2 + \half(\pr_1^*\phi_1^*\theta^L,\pr_2^*\phi_2^*\theta^R)$, and $\iota_\R$ is the coefficient homomorphism.

 To begin, choose a cochain representative $\bar{\eta} \in S^3_{sm}(G;\Z)$ (unique up to coboundary) that satisfies $\iota_\R([\bar{\eta}])=[\eta]$. For concreteness, let $\varepsilon \in S^2_{sm}(G;\R)$ be a smooth cochain that satisfies $\bar{\eta} -\eta = d\varepsilon$.
Since the cohomology class $[\bar{\eta}]\in H^3(G;\Z)$ is primitive (as $G$ is simply connected), there exists a smooth cochain $\tau \in S^2_{sm}(G\times G;\Z)$ (unique up to coboundary, given the choice of $\bar\eta$) that satisfies $d\tau = \mu^*\bar{\eta} - \pr_1^*\bar{\eta} - \pr_2^*\bar{\eta}$.

Since (for $i=1$ and $2$) $(M_i,\omega_i,\phi_i)$  is pre-quantized, there exists a
cochain representative $\bar\omega_i \in S^2_{sm}(M_i;\Z)$ (unique up to coboundary, given the choice of $\bar\eta$) such that $\bar\omega_i -\omega_i + \phi_i^*\varepsilon$ is exact in $S^2_{sm}(M_i;\R)$, so that $[(\bar\omega_i, \bar\eta)]$ is the given pre-quantization of $(M_i,\omega_i, \phi_i)$. 

Define the smooth relative cochain
 $\alpha=(\pr_1^*\bar\omega_1 + \pr_2^*\bar\omega_2 - (\phi_1\times \phi_2)^*\tau,\bar{\eta})$ in $S^3_{sm}(\phi;\Z)$.   It must be verified that (a) $\alpha$ is  a relative cocycle that is cohomologous to $(\omega,\eta)$ with real coefficients, and (b) $[\alpha]$ is independent of the choice of $\bar\eta$, and the subsequent choices of $\tau$ and $\bar\omega_i$. It will follow that $[\alpha]$  is the desired pre-quantization. 

It is clear that $d\alpha = 0$ since
\begin{align*}
d(\pr_1^*\bar\omega_1 + \pr_2\bar\omega_2 -(\phi_1\times\phi_2)^*\tau)
	& =\pr_1^*(-\phi_1^*\bar{\eta}) + \pr_2^*(-\phi_2^*\bar{\eta}) \\
	& \quad -(\phi_1\times\phi_2)^*(\mu^*\bar{\eta} - \pr_1^*\bar{\eta} - \pr_2^*\bar{\eta}) \\
	&=\pr_1^*(-\phi_1^*\bar{\eta}) + \pr_2^*(-\phi_2^*\bar{\eta}) \\
	&\quad -\phi^*\bar{\eta} + \pr_1^*\phi_1^*\bar{\eta} + \pr_2^*\phi_2^*\bar{\eta} \\
	& = -\phi^*\bar{\eta}.
\end{align*}
\noindent And $\alpha$ is cohomologous to $(\omega,\eta)$ with real coefficients, since 
\begin{align*}
\alpha - (\omega,\eta) & = (\pr_1^*\bar\omega_1 + \pr_2^*\bar\omega_2 - (\phi_1\times \phi_2)^*\tau \\
 & \quad - (\pr_1^*\omega_1 + \pr_2^*\omega_2 + \half(\pr_1^*\phi_1^*\theta^L,\pr_2^*\phi_2^*\theta^R) ,\bar{\eta} -\eta) \\
& = (  \pr_1^*(\bar\omega_1-\omega_1) + \pr_2^*(\bar\omega_2 -\omega_2) - (\phi_1\times \phi_2)^*\tau \\
 & \quad -\half (\pr_1^* \phi_1^* \theta^L, \pr_2^*\phi_2^*\theta^R)    , d\varepsilon) \\
& = ( \pr_1^*(du_1 -\phi_1^*\varepsilon) + \pr_2^*(du_2 - \phi_2^*\varepsilon) \\
& \quad - (\phi_1\times \phi_2)^*(\tau + \half(\pr_1^*\theta^L, \pr_2^*\theta^R) ) , d\varepsilon) 
\end{align*}

\noindent for some primitives $u_i$ of $\bar\omega_i -\omega_i +\phi_i^*\varepsilon$. And since $\mu^*\eta=\pr_1^*\eta + \pr_2^* \eta -\half d(\pr_1^*\theta^L, \pr_2^*\theta^R)$, it follows that $d(\tau + \half (\pr_1^*\theta^L, \pr_2^*\theta^R)) = d(\mu^*\varepsilon - \pr_1^*\varepsilon - \pr_2^* \varepsilon)$. Therefore $\tau + \half  (\pr_1^*\theta^L, \pr_2^*\theta^R) -\mu^*\varepsilon + \pr_1^*\varepsilon + \pr_2^* \varepsilon$ is exact since $H^2(G\times G;\R)=0$. Continuing, 
\begin{align*}
\alpha - (\omega,\eta) & = ( \pr_1^*(du_1 -\phi_1^*\varepsilon) + \pr_2^*(du_2 - \phi_2^*\varepsilon) \\
& \quad - (\phi_1\times \phi_2)^*(\mu^*\varepsilon - \pr_1^*\varepsilon - \pr_2^* \varepsilon + dv) , d\varepsilon) \\
& = d(\pr_1^*u_1 + \pr_2^*u_2 - (\phi_1\times \phi_2)^*v, -\varepsilon)
\end{align*} 
\noindent where $v$ is some primitive of  $\tau + \half (\pr_1^*\theta^L,\pr_2^*\theta^R)- \mu^*\varepsilon + \pr_1^*\varepsilon + \pr_2^*\varepsilon$.

Lastly, it is straightforward to check that the cohomology class $[\alpha]$  does not depend on the choices made. Given a choice of $\bar\eta$, changing  $\bar\omega_i$ to $\bar\omega_i + d\beta_i$ changes $\alpha$ by $d(\beta_i,0)$. Changing $\tau$ to $\tau +d\gamma$ changes $\alpha$ by $d(-(\phi_1\times \phi_2)^*\gamma,0)$. Finally, if $\bar\eta$ is changed to $\bar\eta'=\bar\eta + d\rho$, then one may choose cochains $\bar\omega_i'=\bar\omega_i-\phi_i^*\rho$ and $\tau'=\tau +\mu^*\rho - \pr_1^*\rho - \pr_2^*\rho$ that satisfy the appropriate properties, producing the relative cocycle $\alpha'$. It is easy to see  that $\alpha'-\alpha=d(0,-\rho)$.  \end{proof}

\section{Pre-quantization of $G^{2g}$: the level as an obstruction} \label{subsection:restate}

Let $G$ be a simple, compact, connected Lie group. This 
section finds the obstruction to the pre-quantization of
$G^{2g}$, namely a certain cohomology class in $H^3(G^{2g};\Z)$. Theorem \ref{thm:coho-calc-equiv} states that this obstruction vanishes at certain levels (i.e. for certain choices of inner product on $\mathfrak{g}$).

\begin{lemma}Let $(M,\omega,\phi)$ be a level $l$ quasi-Hamiltonian
$G$-space. If $G$ is simply connected, and $(M,\omega,\phi)$ admits a pre-quantization, then $l \in \N$.
\end{lemma}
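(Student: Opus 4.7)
The plan is to use Proposition \ref{prop:eq-pqispq} to replace the hypothesis ``admits a pre-quantization'' with the statement that the real cohomology class $[(\omega,\eta)]\in H^3(\phi;\R)$ is integral, and then to push this class forward to $H^3(G;\R)$ where the level $l$ appears explicitly.

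First I would recall from Section \ref{sec:quasi-intro} that, since the inner product on $\mathfrak{g}$ is $lB$, the canonical 3-form scales linearly with the inner product and so $\eta = l\eta_1$. Next I would consider the natural map $\pi:H^3(\phi)\to H^3(G)$ induced by projecting the mapping cone $\Omega^*(\phi)$ onto its second summand; on classes it sends $[(\omega,\eta)]\mapsto [\eta]$. This map is natural with respect to the coefficient homomorphism $\iota_\R$, so an integral lift of $[(\omega,\eta)]$ in $H^3(\phi;\Z)$ produces, via $\pi$, an integral lift of $[\eta] = l[\eta_1]$ in $H^3(G;\Z)$.

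The key classical fact I would invoke is that for a simply connected compact simple Lie group $G$, the class $[\eta_1]$ associated to the basic inner product is the positive generator of $H^3(G;\Z)\cong \Z$ --- this is the defining normalization property of $B$. Consequently $l[\eta_1]$ is integral in $H^3(G;\R)$ if and only if $l\in\Z$, and combined with the hypothesis $l>0$ built into Definition \ref{defn:qHlevels}, this yields $l\in\N$.

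The main obstacle is conceptual rather than computational: one must correctly identify the integral generator of $H^3(G;\Z)$ with $[\eta_1]$, thereby pinning down the normalization convention for the basic inner product used throughout the paper. Beyond this identification, the argument is just naturality of the long exact sequence of the mapping cone under change of coefficients.
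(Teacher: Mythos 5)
Your argument is correct and follows the same route as the paper: pass to ordinary cohomology via Proposition \ref{prop:eq-pqispq}, map $[(\omega,\eta)]$ to $[\eta]=l[\eta_1]$ under $H^3(\phi;\R)\to H^3(G;\R)$ in the mapping-cone long exact sequence, use naturality of $\iota_\R$ to get an integral lift of $l[\eta_1]$, and invoke that $[\eta_1]$ is the image of the generator of $H^3(G;\Z)\cong\Z$. The paper leaves the appeal to Proposition \ref{prop:eq-pqispq} implicit and does not spell out the $l>0$ step, but otherwise the two proofs coincide.
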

\begin{proof} Indeed, in the long exact sequence:
\[\cdots \to H^2(M;\R)\to H^3(\phi\, ;\R) \to H^3(G;\R)\to\cdots \]
the class $[(\omega,\eta)]$ maps to $[\eta]$. Therefore an
integral pre-image of $[(\omega,\eta)]$ gives an integral
pre-image of $[\eta]$.
It is well known that the generator of $H^3(G;\Z)$ maps to
$[\eta_1]$ under the coefficient homomorphism $\iota_\R$. (See
\cite{PS}, for example.) Since $\eta=l \cdot\eta_1$, $l \in \N$.
\end{proof}

\begin{prop} \label{prop:restateprob} Let $(M,\omega,\phi)$ be a
level $l$ quasi-Hamiltonian $G$-space. Assume $H^2(M;\R)=0$, and $G$
is simply connected.  Then $(M,\omega,\phi)$ admits a pre-quantization if and only if $l\in \mathrm{ker}\,\phi^*\subset
\Z=H^3(G;\Z)$.
\end{prop}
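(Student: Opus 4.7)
The plan is to apply Proposition \ref{prop:eq-pqispq} to reduce the existence of a pre-quantization to the existence of an integral lift of $[(\omega,\eta)] \in H^3(\phi;\R)$, and then analyze this integrality condition using the long exact sequence of the mapping cone of $\phi^*$ with integer and real coefficients simultaneously.

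Specifically, I would consider the commuting ladder
\[
\begin{array}{ccccccc}
H^2(M;\Z) & \to & H^3(\phi;\Z) & \to & H^3(G;\Z) & \stackrel{\phi^*}{\to} & H^3(M;\Z) \\
\downarrow\iota_\R & & \downarrow\iota_\R & & \downarrow\iota_\R & & \downarrow\iota_\R \\
H^2(M;\R) & \to & H^3(\phi;\R) & \to & H^3(G;\R) & \stackrel{\phi^*}{\to} & H^3(M;\R)
\end{array}
\]
induced by the algebraic mapping cone. The hypothesis $H^2(M;\R)=0$ makes the map $H^3(\phi;\R)\to H^3(G;\R)$ injective, and the previous lemma identifies the image of $[(\omega,\eta)]$ in $H^3(G;\R)$ as $[\eta]=l[\eta_1]$, where $[\eta_1]$ corresponds to the generator $1\in H^3(G;\Z)\cong \Z$ under $\iota_\R$.

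For the forward direction, an integral lift $\alpha\in H^3(\phi;\Z)$ of $[(\omega,\eta)]$ restricts to an integer $n\in H^3(G;\Z)$ which, by exactness of the top row, lies in $\ker\phi^*$. Chasing through $\iota_\R$ forces $n[\eta_1]=[\eta]=l[\eta_1]$, so $n=l$ and hence $l\in\ker\phi^*$. For the backward direction, if $l\in\ker\phi^*$ then by exactness there exists $\beta\in H^3(\phi;\Z)$ mapping to $l\in H^3(G;\Z)$. The real class $\iota_\R(\beta)$ and $[(\omega,\eta)]$ both map to $l[\eta_1]$ in $H^3(G;\R)$, and the injectivity established above forces $\iota_\R(\beta)=[(\omega,\eta)]$, producing the desired pre-quantization.

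Essentially nothing in the argument is hard: the main point is simply to exploit the hypothesis $H^2(M;\R)=0$ to turn the real integrality condition on $[(\omega,\eta)]$ (a \emph{relative} cohomology class) into an integrality condition on $[\eta]$ (an \emph{absolute} cohomology class), which then translates via the coefficient homomorphism on $H^3(G)$ into the divisibility statement $l\in\ker\phi^*$. The only subtlety worth flagging is ensuring that the identification $H^3(G;\Z)\cong\Z$ with generator corresponding to $[\eta_1]$ under $\iota_\R$ is used consistently on both sides, so that the integer $l$ appearing in the statement is literally the image of the lift in $H^3(G;\Z)$.
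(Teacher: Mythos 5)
Your proof is correct and follows essentially the same route as the paper's: reduce via Proposition \ref{prop:eq-pqispq} to ordinary cohomology of the mapping cone, exploit $H^2(M;\R)=0$ to make $H^3(\phi;\R)\to H^3(G;\R)$ injective, and chase the commuting ladder of coefficient homomorphisms to convert the relative integrality condition on $[(\omega,\eta)]$ into the divisibility statement $l\in\ker\phi^*$. The only difference is presentational: you write out the full ladder and the verification that the integer hit in $H^3(G;\Z)$ equals $l$, whereas the paper compresses this into a citation of the preceding lemma.
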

\begin{proof} The previous lemma shows that when $(M,\omega,\phi)$ admits a pre-quantization, $l$ is in the image of
$H^3(\phi\,;\Z)\to H^3(G;\Z)$ and therefore in the kernel of
$\phi^*$.

Conversely, suppose $l$ is in the kernel of $\phi^*$, or equivalently
in the image of $H^3(\phi\,;\Z)\toby{q}H^3(G;\Z)$. As in the
previous lemma, $l$ yields an integral lift of $[\eta]$. Choose
$\alpha\in H^3(\phi\,;\Z)$ with $q(\alpha)=l$, or with real coefficients,
$q(\iota_\R\alpha)=[\eta]$. Since $H^2(M;\R)=0$, the map $q$ is an injection
with real coefficients. Therefore $\iota_\R\alpha = [(\omega,\eta)]$.
 \end{proof}

 In particular, to decide which multiples $lB$ of the basic inner product $B$ admit a pre-quantization of $G^{2g}$, it suffices to determine which $l\in \N$ are in the kernel of $\tilde\phi_g^*$.  The kernel is a subgroup of $\Z$, and is therefore generated by some least positive integer $l_0$, which will be computed in Section \ref{section:calculation}.
 
 \begin{remark} \label{remark:rationalcoeffs}
 Note that when $G$ is simple, a quasi-Hamiltonian $G$-space necessarily satisfies $\phi^*=0$ in degree 3 cohomology with real coefficients (by condition (1) of Definition \ref{defn:qhspace}).  Therefore with integer coefficients, the image of $\phi^*$ is necessarily torsion.
 \end{remark}
 

The problem of determining
$l_0$ is independent of $g$. To see this,
let $\phi: G^2 \to G$ denote the commutator map, and
$\tilde\phi:G^2\to \tilde{G}$ its natural lift. Then  the moment
map $\phi_g:G^{2g}\to G$ lifts naturally as

\[ \tilde\phi_g=\mu_g\circ(
\underbrace{ \tilde\phi\times\cdots \times \tilde\phi}_g) \] where
$\mu_g:\tilde{G}^g\to\tilde{G}$ denotes successive multiplication
$\mu_g(x_1,\ldots,x_g)=x_1\cdots x_g$.

This gives $\mathrm{ker}\,\tilde\phi\subset
\mathrm{ker}\,\tilde\phi_g$. Indeed,the generator $z_3\in
H^3(\tilde{G};\Z)$ satisfies $\mu^*(z_3)=z_3\otimes 1 + 1 \otimes
z_3$. (Note that by the K\"unneth formula,
$H^3(\tilde{G}^2;\Z)=H^3(\tilde{G};\Z)\otimes H^0(\tilde{G} ;\Z)
\oplus H^0(\tilde{G};\Z)\otimes H^3(\tilde{G} ;\Z)$.) And
successive multiplication $\mu_g$ will therefore send $z_3$ to a
sum of tensors, where each tensor contains exactly one $z_3$ and
$(g-1)$ $1$'s. Therefore applying  $(\tilde\phi\times\cdots \times
\tilde\phi)^*$ to the resulting sum of tensors gives zero if
$\tilde\phi^*(z_3)=0.$

In fact, considering the inclusion $i:G^2\to G^{2g}$ as the first pair
of factors, one finds that $\tilde\phi$ factors as
\[ \tilde\phi=\tilde\phi_g\circ i \]
Therefore $\mathrm{ker}\,\tilde\phi_g \subset
\mathrm{ker}\,\tilde\phi$, which together with Proposition
\ref{prop:restateprob} shows:

\begin{thm}\label{thm:coho-calc-equiv} Let $G$ be a simple
compact connected Lie group, and $\tilde{G}$ its universal cover. Let $\phi$
denote the commutator map in $G$, and $\tilde\phi:G^2\to\tilde{G}$
its natural lift. The quasi-Hamiltonian $\tilde{G}$-space $(G^{2g},\omega,\tilde\phi_g)$ admits a pre-quantization if and only if the underlying level $l=ml_0$ for some $m\in \N$ (i.e. the chosen
invariant inner product on $\mathfrak{g}$ is
 $ml_0B$) where $l_0>0$ is the generator of $\mathrm{ker}\{
\tilde\phi^*:H^3(\tilde{G};\Z) \to H^3(G^2;\Z)\}\subset \Z$.
\end{thm}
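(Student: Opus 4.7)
The plan is to apply Proposition \ref{prop:restateprob} directly to the quasi-Hamiltonian $\tilde{G}$-space $(G^{2g},\omega,\tilde\phi_g)$, taking the simply connected ambient group to be the universal cover $\tilde{G}$. Two hypotheses must be verified: that the group is simply connected (automatic for $\tilde{G}$), and that $H^2(G^{2g};\R)=0$. The latter is immediate from K\"unneth and the standard vanishing $H^1(G;\R)=H^2(G;\R)=0$ for a compact connected simple Lie group. With these in place, Proposition \ref{prop:restateprob} reduces the theorem to the identification
\[
\ker\{\tilde\phi_g^*:H^3(\tilde{G};\Z)\to H^3(G^{2g};\Z)\}=\ker\{\tilde\phi^*:H^3(\tilde{G};\Z)\to H^3(G^2;\Z)\}
\]
as subgroups of $\Z$, after which $l_0$ is by definition the positive generator of this common kernel.

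To establish this equality I would prove both inclusions, each via one of the two factorizations exhibited in the paragraphs preceding the theorem. For $\ker\tilde\phi^*\subseteq\ker\tilde\phi_g^*$, I would use $\tilde\phi_g=\mu_g\circ(\tilde\phi\times\cdots\times\tilde\phi)$ together with the primitivity of the degree-$3$ generator $z_3\in H^3(\tilde{G};\Z)$: iterating $\mu^*(z_3)=z_3\otimes 1 + 1\otimes z_3$ shows that $\mu_g^*(z_3)$ is a sum of K\"unneth tensors, each containing exactly one $z_3$ and $(g-1)$ units, so if $\tilde\phi^*(z_3)=0$ then pulling back via $\tilde\phi\times\cdots\times\tilde\phi$ annihilates every term. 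For the reverse inclusion I would use the factorization $\tilde\phi=\tilde\phi_g\circ i$, where $i:G^2\hookrightarrow G^{2g}$ is inclusion as the first pair of factors; this gives $\tilde\phi^*=i^*\circ\tilde\phi_g^*$, so $\ker\tilde\phi_g^*\subseteq\ker\tilde\phi^*$ follows at once.

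The substantive obstacle is really contained in Proposition \ref{prop:restateprob} itself; once that is in hand the present theorem is essentially formal, and the two inclusions above are the only remaining cohomological content. Both factorizations have already been recorded in the discussion just before the theorem, so my proof would amount to verifying the K\"unneth vanishing $H^2(G^{2g};\R)=0$, citing the proposition to pass from pre-quantization to a kernel condition in $H^3(\tilde{G};\Z)$, and then assembling the two inclusions via the factorizations above.
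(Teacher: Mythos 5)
Your proposal is correct and follows essentially the same route as the paper: apply Proposition \ref{prop:restateprob} to the quasi-Hamiltonian $\tilde{G}$-space $(G^{2g},\omega,\tilde\phi_g)$ (the paper leaves the K\"unneth verification of $H^2(G^{2g};\R)=0$ implicit, which you correctly supply), and then establish $\ker\tilde\phi_g^*=\ker\tilde\phi^*$ using the two factorizations $\tilde\phi_g=\mu_g\circ(\tilde\phi\times\cdots\times\tilde\phi)$ (with primitivity of $z_3$) and $\tilde\phi=\tilde\phi_g\circ i$, exactly as in the discussion preceding the theorem.
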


\begin{remark} \label{conjugacy} As mentioned in the introduction, the above theorem combined with Theorem 6.1 of \cite{M}, and Proposition \ref{prop:fusion} give necessary and sufficient conditions for the pre-quantization of the quasi-Hamiltonian $\tilde{G}$-space $G^{2g}\times \tilde{\mathcal{C}}$, where $\tilde{\mathcal{C}}$ is a conjugacy class in $\tilde{G}$.  In particular, the decomposition  (\ref{decomp2}) in Section \ref{sec:quasi-intro} shows that these results extend to the symplectic quotients $(G^{2g}\times \mathcal{C})/\!/G$, where $\mathcal{C}$ is a conjugacy class in $G$.
\end{remark}


\section{Computing the integer $l_0$} \label{section:calculation}

The computation of $l_0$ (see Theorem \ref{thm:coho-calc-equiv}) uses the classification of compact simple Lie
groups, and known results about their cohomology. The strategy for
determining the image of $\tilde\phi^*$ with integer
coefficients is to compute $\tilde\phi^*$ with coefficients in $\mathbb{Q}$ (see Remark \ref{remark:rationalcoeffs}), and coefficients in
$\Zp$ for every prime $p$. This information is then pieced back together with the use of the Bockstein
spectral sequence in order to deduce the image of  $\tilde\phi^*$  with integer coeffiecients. (See \cite{S} for details about the Bockstein spectral sequence.)

Before beginning the calculation, it may be useful to establish some notation, and recall some elementary facts. 
Let $G$ be a topological group, with multiplication map $\mu_G:G\times G \to G$ (denoted $\mu$ when the group G is understood). The commutator map $\phi_G:G\times G\to G$ (denoted $\phi$ when the group $G$ is understood) is the composition given by:
\[G^2 \toby{\Delta\times\Delta} G^4 \toby{1\times T\times 1}
G^4 \toby{1\times 1\times c\times c} G^4 \toby{\mu\times \mu} G^2
\toby{\mu} G \] where:

\begin{itemize} \item $\Delta:G\to G^2$ denotes the
diagonal map $\Delta(g)=(g,g)$

\item $T:G^2\to G^2$ is the switching map $T(g,h)=(h,g)$

\item $c:G\to G$ is inversion $c(g)=g^{-1}$
\end{itemize}

These maps induce the following on cohomology with $\Zp$ coefficients
(using the identification $H^*(G)\otimes H^*(G)\cong
H^*(G\times G)$):
\begin{itemize}
\item $\Delta^*:H^*(G)\otimes H^*(G)\to H^*(G)$ is the algebra
multiplication

\item $T^*(u\otimes v)=(-1)^{|u||v|}v\otimes u$

\item $\mu^*:H^*(G)\to H^*(G)\otimes H^*(G)$ is the co-algebra
co-multiplication

\item $c^*(u)=-u$ if $u$ is primitive. If $u$ is not
primitive,  $c^*(u)$ may be calculated with the knowledge of $c^*(v)$
for all $v$ of smaller degree, and the fact that the composition $G\toby{\Delta} G^2\toby{1\times c} G^2\toby{\mu} G$ is trivial.
\end{itemize}

Throughout this section, $\pi: \tilde{G}\to G$ denotes the universal covering homomorphism, and hence $G=\tilde{G}/\Gamma$, where $\Gamma$ is a finite central subgroup of $\tilde{G}$. Let $\tilde\phi_G=\tilde\phi: G\times G \to \tilde{G}$ denote the canonical lift of $\phi_G$.

\subsection{The $A_n$'s}

Consider $\tilde{G}=SU(n)$, with center $\Zn$. Then the central
subgroups of $\tilde{G}$ are cyclic groups $\Z_l$ where $l$
divides $n$. Write $G=\tilde{G}/\Z_l$.

\begin{thm} \label{thm:An} Let $\tilde{G}=SU(n)$, and suppose $G=\tilde{G}/\Z_l$, where $l$ divides $n$. Then $l_0=\mathrm{ord}_l(\frac{n}{l})$, where $\mathrm{ord}_l(x)$ denotes the order of $x \mod l$ in $\Z_l$.
\end{thm}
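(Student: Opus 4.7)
By Theorem~\ref{thm:coho-calc-equiv}, since $H^3(SU(n);\Z)\cong\Z$ is generated by a class $z_3$, the integer $l_0$ equals the order of $\tilde\phi^*(z_3)\in H^3(G^2;\Z)$. By Remark~\ref{remark:rationalcoeffs}, this class is torsion. The plan, following the general strategy outlined in the opening of this section, is to compute $\tilde\phi^*(z_3)$ with $\Z_p$ coefficients for each prime $p$, and then to assemble the $p$-primary orders via the Bockstein spectral sequence.

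For primes $p \nmid l$ the Serre spectral sequence for the fibration $\tilde{G}\to G\to B\Z_l$ collapses with $\Z_p$-coefficients, since $H^{>0}(B\Z_l;\Z_p)=0$. Consequently $\pi^*\colon H^*(G;\Z_p)\to H^*(\tilde G;\Z_p)$ is a Hopf algebra isomorphism, and the identity $\tilde\phi_{\tilde G}=\tilde\phi_G\circ(\pi\times\pi)$ reduces the mod-$p$ computation to the simply-connected case. In that case I would verify $\tilde\phi_{\tilde G}^*(z_3)=0$ directly from the commutator formula $\phi = \mu(1\times c)(\mu\times\mu)(1\times T\times 1)(\Delta\times\Delta)$ applied to the primitive class $z_3$: the contributions from $\mu^*(z_3)=z_3\otimes 1 + 1\otimes z_3$ and $c^*(z_3)=-z_3$ cancel telescopically after $(\Delta\times\Delta)^*$.

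For primes $p\mid l$ the spectral sequence does not collapse, and this is where the essential content lies. The transgression sends $z_3$ to $\frac{n}{l}$ times the canonical degree-$4$ generator of $H^*(B\Z_l;\Z_p)$, an input from classical Lie group cohomology. As a result $H^*(G;\Z_p)$ acquires new classes $y_i$ pulled back from $B\Z_l$, equipped with a Hopf algebra structure classically described by Baum--Browder. To compute $\tilde\phi_G^*(z_3)\pmod p$ I would exploit the description of $\tilde\phi_G$ as the descent of the $\Z_l^2$-invariant map $\tilde\phi_{\tilde G}\colon \tilde G^2\to \tilde G$ along the covering $\pi\times\pi$, comparing the Serre spectral sequences of $\tilde G\to G\to B\Z_l$ and $\tilde G^2\to G^2\to B(\Z_l^2)$; the surviving non-zero contribution lies in $\ker\bigl((\pi\times\pi)^*\bigr)\subset H^3(G^2;\Z_p)$ and is proportional to $\frac{n}{l}$.

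Finally, the Bockstein spectral sequence on $H^*(G^2;\Z)$ converts these mod-$p$ orders into the integral order of $\tilde\phi^*(z_3)$, yielding $l/\gcd(l,n/l)=\mathrm{ord}_l(n/l)$. I expect the main obstacle to be the Hopf-algebra bookkeeping for $p\mid l$: one must correctly carry out the transgression comparison, identify $\tilde\phi_G^*(z_3)$ in the Künneth decomposition of $H^3(G^2;\Z_p)$ using the Baum--Browder generators, accurately track the factor $n/l$ through each step, and confirm that the Bockstein differentials behave as predicted across all primes dividing $l$.
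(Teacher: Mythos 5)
Your overall skeleton matches the paper's: pass to $\Z_p$ coefficients prime by prime, read off the $p$-primary part of the order of $\tilde\phi^*(z_3)$, and reassemble the integral order via the Bockstein spectral sequence. Your treatment of primes $p\nmid l$ is fine and close to the paper's (the paper phrases it as ``$H^*(SU(n);\Z_p)$ is co-commutative, hence $\phi^*=0$''; your direct cancellation argument for a primitive $z_3$ is equivalent). The paper also differs from you tactically for $p\mid l$: rather than comparing Serre spectral sequences of the coverings, it uses the explicit Baum--Browder Hopf algebra structure to compute $\phi^*(x_3)=x_1\otimes y-y\otimes x_1$ by hand from the commutator composite, then reads off the Bockstein from $\beta^{(r)}(x_1)=y$, and finally handles $1<l<n$ by pushing this computation through the covering $f\colon G\to P\tilde G$ (where $f^*$ scales the degree-one generator by $p^{r-s}$, which is where $n/l$ enters).

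The genuine gap is the case $n\equiv 2\pmod 4$ at the prime $2$, which your sketch does not address and which your method cannot handle without a new idea. When $p=2$ and $s=1$, Baum--Browder gives the extra relation $y=x_1^2$; then $H^3(G;\Z_2)$ is generated by $x_1^3$, which is a product of lower-degree classes and hence in the kernel of $\pi^*$. Thus $\pi^*\colon H^3(G;\Z_2)\to H^3(\tilde G;\Z_2)\cong\Z_2$ is the zero map, the class $z_3$ is \emph{not} pulled back from $G$, and the identity $\phi=\pi\circ\tilde\phi$ no longer reduces the problem to a Hopf-algebraic computation of $\phi^*$ in $H^*(G;\Z_2)$. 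The paper resolves this with a separate, substantially longer homotopy-theoretic argument: reduce to $PSU(2)=SO(3)\cong\RP^3$ via a diagonal $j\colon PSU(2)\to PSU(n)$ inducing isomorphisms on $H^{\le 4}(\quad;\Ztwo)$, pass to the smash $\RP^3\wedge\RP^3$, and show that the lifted map $\tilde\varphi$ restricted to $\Sigma\RP^2$ is essential by identifying it (after looping) with the Samelson product $\langle\alpha,\alpha\rangle$, whose restriction to the bottom cell is adjoint to the Whitehead product $[\iota,\iota]=2\eta$, a nonzero element of $\pi_3(\Sigma\RP^2)\cong\Z_4$. None of this is visible in your proposal, and it is precisely the part your ``spectral-sequence comparison plus Bockstein'' plan cannot reach, because the relevant class in degree $3$ simply is not detected by $H^*(G;\Z_2)$. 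You should isolate this case explicitly and supply an argument of this type (or a genuinely different one) for it.
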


\begin{proof}  It suffices to compute $\tilde\phi^*$ with coefficients in $\Zp$ for primes $p$
dividing $l$, since for primes $p$ not dividing $l$, the map
$\pi:\tilde{G} \to G$ induces an isomorphism on cohomology with
$\Zp$ coefficients. Therefore $\tilde\phi$ induces the
zero map on cohomology with coefficients in $\Zp$, as the Hopf
algebra $H^*(SU(n);\Zp)$ is co-commutative.

Recall the following computation due to Baum and Browder
\cite{BB}.

\begin{thm} \label{thm:BB}
Let $p$ be a prime dividing $l$.  Let $n=p^rn'$ and $l=p^sl'$
where $p$ is relatively prime to both $n'$ and $l'$. If $p\neq 2$
or $p=2$ and $s>1$, then there exist generators $x_{2i-1}\in
H^{2i-1}(G;\Zp)$, for $1\leq i\leq n$, $i\neq p^r$, and $y\in
H^2(G;\Zp)$ such that
\begin{itemize}
\item As an algebra,
\[H^*(G;\Zp)= \Lambda(x_1, \ldots, \hat{x}_{2p^r-1}, \ldots, x_{2n-1})
  \otimes \Zp[y]/(y^{p^r}) \]
where $\hat{}$ denotes omission.

\item The reduced coproduct is given by:
\[ \bar\mu^*(x_{2i-1})=\delta_{rs} x_1\otimes y^{i-1} +
     \sum_{j=2}^{i-1}\binom{i-1}{j-1} x_{2j-1}\otimes y^{i-j},\quad i\geq 2 \]
and $\bar\mu^*(x_1)=\bar\mu^*(y)=0$.
\end{itemize}
If $p=2$ and $s=1$ then the cohomology with $\Ztwo$ coefficients
is as above, with the additional relation that $y=x_1^2$.

\end{thm}

Theorem \ref{thm:An} is first proved for the case $G=\tilde{G}/\Zn$. This in turn will be considered in two cases, as it turns out $n=2$ mod $4$ requires special attention at the prime $2$. The case $G=\tilde{G}/\Z_l$, where $l<n$  will be considered later.

Suppose $l=n$. In this case, it will be shown
that $\tilde\phi^*(z_3)$ is the generator of the $\Zn$ summand in
$H^3(G\times G;\Z)$, and therefore $l_0=n$.

{\bf $n\neq 2\,\mathrm{mod}\,4$ }

In this case, one may deal with all primes $p$ dividing $n$
simultaneously.  By Corollary 4.2 in \cite{BB} $\pi^*(x_3)=z_3$ is the reduction $\mathrm{mod}\,p$ of the
generator with integer coefficients.  Since $\pi\tilde\phi=\phi$, it
suffices to calculate $\phi^*(x_3)$.  This is done next with $\Zp$ coefficients.

To compute $c^*(x_3)$,  notice that $x_3$ is not primitive. In
fact, according to Theorem \ref{thm:BB}, $\mu^*(x_3)=x_3\otimes 1
+ x_1\otimes y + 1\otimes x_3$. Therefore, $c^*(x_3)=-x_3+x_1y$.
Indeed,
\begin{align*}
0 & =  \Delta^*(1\times c)^*(x_3\otimes 1 + x_1\otimes y +
1\otimes x_3)
 \\
& =  \Delta^*(x_3\otimes 1 - x_1\otimes y + 1\otimes c^*(x_3))
\\
& =  x_3 - x_1 y +  c^*(x_3)
\end{align*}

To compute $\phi^*(x_3)$:

\begin{align*}
\phi^*(x_3) &= (\Delta\times\Delta)^*(1\times T\times
1)^*(1\times 1\times c\times c)^*(\mu\times \mu)^*\mu^*(x_3) \\
& = (\Delta\times\Delta)^*(1\times T\times 1)^*(1\times 1\times
c\times c)^*(x_3\otimes 1\otimes 1\otimes 1  \\
& \quad + x_1\otimes y \otimes 1\otimes 1 + 1\otimes x_3\otimes 1\otimes 1 + x_1\otimes 1\otimes
y\otimes 1 \\
& \quad + x_1\otimes 1\otimes 1\otimes y+1\otimes x_1 \otimes
y\otimes 1 + 1\otimes x_1 \otimes 1\otimes y \\
& \quad + 1\otimes 1\otimes x_3\otimes 1 + 1\otimes 1\otimes x_1\otimes y + 1\otimes 1\otimes1\otimes x_3)\\
&= (\Delta\times\Delta)^*(x_3\otimes 1\otimes 1\otimes 1  +
x_1\otimes 1 \otimes y\otimes 1 + 1\otimes 1\otimes x_3\otimes 1 + \\
&\quad - x_1\otimes y\otimes 1\otimes 1 - x_1\otimes 1\otimes
1\otimes y - 1\otimes y \otimes x_1\otimes 1 \\
& \quad - 1\otimes 1 \otimes x_1\otimes y - 1\otimes x_3\otimes 1\otimes1 + 1\otimes x_1y\otimes 1
\otimes 1 \\
& \quad + 1\otimes x_1\otimes 1\otimes y
- 1\otimes 1\otimes1\otimes x_3 +  1\otimes 1\otimes 1\otimes x_1y)\\
&=x_1\otimes y - y\otimes x_1
\end{align*}

By using the Bockstein spectral sequence, it will be shown next that
$x_1\otimes y - y\otimes x_1 \in H^3(G^2;\Zp)$ is the reduction
$\mathrm{mod}\,p$ of an integral class that generates a torsion
summand.

Write $n=p^rn'$ where $p$ does not divide $n'$.  As
$H_1(G;\Z)=\pi_1(G)=\Zn$, by the Universal Coefficient Theorem,
$H^2(G;\Z)$ contains a $\Zn\cong \Z_{p^r} \times \Z_{n'}$ summand.
This implies that $\beta^{(r)}(x_1)=y$ where $x_1$, and $y$ are
the cohomology classes in Theorem \ref{thm:BB}, and $\beta^{(r)}$
is the $r$-th Bockstein operator (see \cite{S}).

Therefore $\beta^{(r)}(x_1\otimes x_1)=y\otimes x_1 - x_1\otimes y $
in $H^*(G;\Zp)\otimes H^*(G;\Zp)\cong H^*(G\times G;\Zp)$.
This implies that $y\otimes x_1 - x_1\otimes y$ is the reduction
$\mathrm{mod}\,p$ of a generator of the $\Z_{p^r}$ summand in
$H^3(G\times G;\Z_{(p)})$ (i.e. ignoring torsion prime to $p$).

In other words, $\tilde{\phi}^*(z_3)=\phi^*(x_3)$ is the
reduction mod $p$ of a generator of the torsion summand in
$H^3(G\times G;\Z)$, which is $\Zn$. Thus the kernel of
$\tilde\phi^*$ is $n\Z$, and $l_0=n$.

{\bf
 $n=2\,\mathrm{mod}\,4$ }

First observe that the calculations
above remain valid for all primes $p\neq 2$ dividing $n$. That is,
for such primes, $\tilde\phi^*(z_3)=\phi^*(x_3)$ is the reduction
mod $p$ of a generator of the $\Zn$ summand in $H^3(G^2;\Z)$.
The same is true at the prime $2$, as shown next.

This is done by showing that $\tilde\phi_*$ is non-zero with $\Ztwo$
coefficients, and hence $\tilde\phi^*$ is non-zero as well with
$\Ztwo$ coefficients. Then, using the Bockstein spectral sequence, it will be verified that $\tilde\phi^*(z_3)$ is
indeed the reduction mod $2$ of the generator of the $\Zn$ summand
in $H^3(G^2;\Z)$.

In fact, it will suffice to consider the case $G=PSU(2)$, since there is a homomorphism $j:PSU(2)\to PSU(n)$ that induces isomorphisms on
$H^q(\quad;\Ztwo)$ when $q\leq 4$.
To see this, consider the diagonal inclusion $\iota: SU(2)\to SU(n)$ that sends a matrix $A$ to $A\oplus \cdots \oplus A$, which induces a homomorphism $j:PSU(2) \to PSU(n)$. Since $n=2\cdot\text{odd}$, $\iota^*:H^3(SU(n);\Ztwo)\to H^3(SU(2);\Ztwo)$ is an isomorphism.  And the map $j$ induces an isomorphism on $H_1(\quad;\Ztwo)$ 
 and hence on $H^1(\quad;\Ztwo)$. Recall that for $q=1$, $2$, and $3$, $H^q(PSU(n);\Ztwo) = \Ztwo$ generated by $x^q$, and that $H^4(PSU(n);\Ztwo)=0$,
 where $x=x_1$ (in the notation of Theorem \ref{thm:BB})
is the generator in degree $1$. Thus $j^*$ is also an isomorphism
on $H^q(\quad;\Ztwo)$ for $q=2$, $3$, and $4$.

Therefore it suffices to consider the covering $SU(2)\to PSU(2)=SO(3)$, or equivalently, $\pi:S^3\to \RP^3$.

The strategy is to show $\tilde\phi_*\neq0$ on $H_3(\quad;\Ztwo)$.
(Until further notice, all homology groups are with coefficients
in $\Ztwo$.)

Now, $\phi:\RP^3\times \RP^3\longrightarrow \RP^3$ restricted to the wedge product
$\RP^3\vee\RP^3$ is null homotopic. This induces a map from the smash product
$\varphi:\RP^3\wedge\RP^3\to \RP^3$. And since $\RP^3\wedge\RP^3$
is simply connected, $\varphi$ lifts to $S^3$. Denote this lift by
$\tilde\varphi:\RP^3\wedge \RP^3 \to S^3$. 

Since $\RP^3\times \RP^3 \to \RP^3\wedge\RP^3$ is a surjection on
homology, it suffices to show $\tilde\varphi_*$ is non-zero on
$H_3(\quad)$. As this is a question about homology in degree $3$
it is enough to consider the map $\tilde\varphi$
restricted to (any subspace containing) the $3$-skeleton of
$\RP^3\wedge\RP^3$. In particular, it suffices to show that $\tilde\varphi$ is
non-trivial on $H_3(\quad)$ when restricted to $\RP^2\wedge \RP^2\subset
\RP^3\wedge \RP^3$. The restriction $\tilde\varphi|_{\RP^2\wedge \RP^2}$ will also be denoted $\tilde\varphi$ in order to simplify notation.

The homology of $\RP^2\wedge\RP^2$ is the following (with basis
listed in $\langle \quad \rangle$):

\[ H_q(\RP^2\wedge\RP^2)=\left\{ \begin{array}{cl}
0                                           & \text{if } q=1 \\
 \langle a\otimes a \rangle =\Ztwo                        & \text{if } q=2 \\
 \langle a\otimes b, b\otimes a \rangle =\Ztwo\oplus\Ztwo  &\text{if } q=3 \\
\langle b\otimes b\rangle=\Ztwo                          & \text{if } q=4 \\
\end{array} \right.  \]
where $a\in H_1(\RP^2)$ and $b\in H_2(\RP^2)$ are generators, with
Bockstein $\beta(b)=a$. Thus $\beta(b\otimes b)=a\otimes b
+b\otimes a$ and $\beta(a\otimes b)=a\otimes a =\beta(b\otimes
a)$. For reasons that will be clear in a moment, pick a new basis
for $H_3(\RP^2\wedge\RP^2)=\langle a\otimes b, a\otimes b + b\otimes a \rangle$.

An analysis of the cell structure of $\RP^2\wedge \RP^2$ shows
that there is a cofibration sequence $\Sigma\RP^2\toby{i}
\RP^2\wedge \RP^2 \toby{q} \Sigma^2\RP^2$, where $\Sigma X$ denotes the suspension of $X$. From the long exact sequence in homology, observe that:

\begin{enumerate}

\item[(i)] the basis element $a\otimes a$ `comes from' the bottom
cell of $\Sigma\RP^2$. i.e. $i_*(\sigma a) = a\otimes a$

\item[(ii)] the basis element $a\otimes b$ `comes from' the top
cell of $\Sigma \RP^2$. i.e. $i_*(\sigma b) = a\otimes b$

\item[(iii)] the basis element $a\otimes b + b\otimes a$ `comes
from' the bottom cell of $\Sigma^2\RP^2$. i.e. $q_*(a\otimes b+
b\otimes a)= \sigma^2 a$

\item[(iv)] the basis element $b\otimes b$ `comes from' the top
cell of $\Sigma^2\RP^2$. i.e. $q_*(b\otimes b)=\sigma^2 b$
\end{enumerate}
where $\sigma:H^*(X)\to H^{*+1}(\Sigma X)$ denotes the suspension isomorphism.

Since $\tilde\varphi_*(a\otimes b +b\otimes
a)=\tilde\varphi_*(\beta (b\otimes
b))=\beta\tilde\varphi_*(b\otimes b)=0$,
 it will suffice to check $\tilde\varphi_*(a\otimes b) =z_3$.
(Using the Bockstein spectral sequence, one finds that this
corresponds to the dual statement that $\tilde\varphi^*(z_3)$ generates
the $\Ztwo$ summand, which is consistent with the previous case.
Therefore checking the above equality is precisely what is
required to prove.)
And by (ii), it suffices to show that
$\tilde\varphi_*\neq 0$ when restricted to $\Sigma\RP^2$.
By the following two facts, this is equivalent to showing that the
map $\tilde\varphi\circ i:\Sigma\RP^2\to S^3$ is essential.

\begin{fact}
The set of homotopy classes of maps $[\Sigma\RP^2,S^3]=\Ztwo$.
\end{fact}
\begin{proof} From the cofibration sequence
\[ S^2\to\Sigma\RP^2 \to S^3 \to S^3 \]
 there is a long exact sequence
\[ [S^3,S^3] \toby{-\times 2} [S^3,S^3] \to [\Sigma\RP^2,S^3] \to [S^2,S^3] \]
which gives the result. \end{proof}

\begin{fact}
The essential maps $\Sigma\RP^2\to S^3$ induce isomorphisms on
$H_3$.
\end{fact}
\begin{proof} Again, consider the cofibration sequence
$S^2\to\Sigma\RP^2 \to S^3$ and the corresponding long exact
sequence in homology. \end{proof}

Actually, it is enough to show that the composition $\Sigma\RP^2
\to S^3\toby{\pi} \RP^3$ is essential. Indeed, $\Sigma\RP^2$ is
simply connected, so $[\Sigma\RP^2,\RP^3]=[\Sigma\RP^2,S^3]$.

\noindent\textbf{Claim:} There is a homotopy commutative diagram:
\[ \begin{array}{ccccccc}
\Sigma\RP^2 & \toby{i} & \RP^2\wedge\RP^2 & \to & \RP^3\wedge\RP^3
\\
  &   & \downarrow \langle\alpha, \alpha\rangle &   & \downarrow \varphi \\
  &   & \Omega\Sigma\RP^2 & \longrightarrow & \RP^3 \\
\end{array} \]

\begin{proof}[Proof of Claim] Let $\alpha:\RP^2\to
\Omega\Sigma\RP^2$ denote the adjoint of the identity map on
$\Sigma\RP^2$. The map labeled $\langle\alpha, \alpha\rangle$ is the
Samelson Product of $\alpha$ with itself.
(Recall that the Samelson product of two maps $f,g:X\to G$ where
$G$ is an H-group --- a topological group up to homotopy --- is defined as follows. The restriction to $X\vee X$ of the
composition $[f,g]:X\times X \to G$, given by
$[f,g](x,y)=f(x)g(y)f(x)^{-1}g(y)^{-1}$, is null
homotopic, and yields a map  $\langle f,  g \rangle:X\wedge X\to
G$.)

The map $\Omega\Sigma\RP^2\to \RP^3$ is described next. (Recall that ``suspension'' is the adjoint functor to ``based loop'': $[X,\Omega Y]\cong [\Sigma X, Y]$.) The adjoint
of the inclusion $\RP^2\hookrightarrow \RP^3 \approx \Omega B\RP^3$ gives a map
$\rho:\Sigma\RP^2\to B\RP^3$, and hence a map
$\Omega(\rho):\Omega\Sigma\RP^2\to \Omega B\RP^3\approx \RP^3$. (Here, $BH$ denotes the classifying space of the topological group $H$, and $\RP^3$ is viewed as a topological group via the identification $\RP^3 = SO(3)$.)
An important point is that this map
is an H-map --- a homomorphism up to homotopy --- so that (up to homotopy) commutators map to commutators.  Therefore
the diagram homotopy commutes, and the claim is proved. 
\end{proof}

Now suppose for contradiction that the composition $\Sigma\RP^2\to \RP^2\wedge \RP^2 \to
\Omega\Sigma \RP^2 \toby{\Omega(\rho)} \RP^3$ is null homotopic. Then there exists
a lift $g:\Sigma\RP^2 \to \Omega Z$ where $Z$ is the homotopy
fiber of the map $\rho:\Sigma \RP^2 \to B\RP^3$ (and hence $\Omega Z$ is the homotopy fiber of the map $\Omega (\rho)$).
Consider the restriction of $g$ to the $2$-skeleton of
$\Sigma\RP^2$, namely $S^2 \to\Sigma \RP^2 \to \Omega Z$.  Since
$\pi_2(\Sigma RP^2)=\Ztwo$ is  torsion, then according to the following
fact, the map $S^2 \to\Sigma \RP^2 \to \Omega Z$ is null homotopic.

\begin{fact} (Proposition 6.6 in \cite{W}) $\pi_2(\Omega Z) \cong \Z$
\end{fact}

But then the composition $h:S^2\to \Sigma\RP^2 \to \Omega Z \to \Omega \Sigma
\RP^2$ is null homotopic, which  is a contradiction by the following
fact.

\begin{fact} (Proposition 6.5 in \cite{W}) $\pi_3(\Sigma\RP^2) = \Z_4$ is generated by the
composition $S^3\toby{\eta}S^2\to \Sigma\RP^2$, where $\eta$ is
the Hopf map.
\end{fact}

It is a contradiction because the map $h$ is actually the adjoint of twice the generator of
$\pi_3(\Sigma\RP^2)$. To see this, recall that $h$ is the composition of $S^2 \to \Sigma\RP^2 \to \RP^2\wedge \RP^2$, which is the inclusion of the 2-skeleton, and $\RP^2 \wedge \RP^2  \to \Omega\Sigma \RP^2$, which is the Samelson product $\langle \alpha, \alpha \rangle$. Therefore $h$ is either composition in the diagram below:
\[
\begin{array}{ccccccc}
S^2 & = & S^1\wedge S^1 & \toby{j\wedge j} & \RP^2\wedge\RP^2 \\
 & & \quad\downarrow \langle\iota, \iota\rangle &   & \downarrow \langle\alpha, \alpha\rangle \\
 & & \Omega S^2 \quad &\toby{\Omega\Sigma j}   & \Omega\Sigma\RP^2 \\
\end{array} \]
where $j:S^1\to\RP^2$ is the inclusion of the 1-skeleton, and
$\iota$ is the adjoint of the identity on $S^2$. (The square commutes, since  $\Omega\Sigma j\circ \iota= \alpha\circ j$, and Samelson product is functorial.)
Now, the Samelson product $\langle\iota,\iota\rangle$ is the
adjoint of the map $[\iota,\iota]$, called the Whitehead product.
And the Whitehead product is known to be homotopic to $2\eta$ where $\eta$ is the
Hopf map. Therefore the composition $h$ is indeed
the adjoint of twice the generator of $\pi_3(\Sigma\RP^2)$. This completes the proof of the case $G=\tilde{G}/\Zn$.
\\

The case $G=\tilde{G}/\Z_l$ where $1<l<n$  will now be considered.  It will be shown that $l_0$ is the order of $q=\frac{n}{l} \mod l$ in $\Z_l$.

 Let $P\tilde{G}$ denote $\tilde{G}/\Zn$, and let $p$ be a prime that divides $l$. Write $l=p^sl'$ and
$n=p^rn'$ where $p$ is relatively prime to both $l'$ and $n'$.
Consider the covering homomorphism $f:G\to P\tilde{G}$ with kernel $\Z_n/\Z_l = \Z_{n/l}$. From the diagram:
\[ \begin{array}{ccc}
\tilde{G}  &  =& \tilde{G} \\
\downarrow   &        & \downarrow  \\
G & \toby{f} & P\tilde{G} \\
\end{array} \]
 it follows that $\tilde{\phi}_G=\tilde{\phi}_{P\tilde{G}}\circ(f\times f)$.
If $s=r$, then \cite{BB} show that $f$
induces an isomorphism on $H^*(\quad;\Zp)$. Therefore, for such
primes $p$, $\tilde\phi^*_G(z_3)$ is the reduction mod $p$ of the
generator of the torsion summand of order $l$ in $H^3(G\times G;\Z)$.

If $s<r$, then by a method similar to the previous cases $\tilde\phi_{P\tilde{G}}^*(z_3)=x_1\otimes y - y \otimes x_1\in H^3(P\tilde{G}\times P\tilde{G};\Z_{p^s})$ is the reduction mod $p^s$ of generator of a $\Z_{p^r}$ summand in $H^3(P\tilde{G}\times P\tilde{G};\Z)$.  Now, the induced homomorphism $f_*:H_1(G;\Z) \to H_1(P\tilde{G};\Z)$ sends a generator in $H_1(G;\Z)\cong\Z_l$ to $q=\frac{n}{l}$ times a generator of $H_1(P\tilde{G};\Z)\cong \Z_n$. Therefore, $f^*:H^1(P\tilde{G};\Z_{p^s}) \to H^1(G;\Z_{p^s})$ sends a generator in $H^1(P\tilde{G};\Z_{p^s}) \cong \Z_{p^s}$ to $p^{r-s}$ times a generator in $H^1(G;\Z_{p^s})\cong\Z_{p^s}$. Also, from \cite{BB}, $f^*$ is an isomorphism in dimension 2, which gives $(f\times f)^*(x_1\otimes y - y\otimes x_1)=p^{r-s}(x_1\otimes y - y\otimes x_1)$, $p^{r-s}$ times the reduction mod $p^s$  of a generator of a $\Z_{p^s}$ summand in $H^3(G\times G;\Z)$.   

Therefore, for primes $p$ with $s=r$, the reduction mod $p^s$ of the obstruction $\tilde\phi_G^*(z_3)\in \Z_l\subset H^3(G\times G;\Z)$ generates a $\Z_{p^s}$ summand, and for primes $p$ with $s<r$, the reduction mod $p^s$ of the obstruction $\tilde\phi_G^*(z_3)$ is $p^{r-s}$ times a generator of a $\Z_{p^s}$ summand.
These observations imply that $l_0$, the order of the obstruction $\tilde\phi_G^*(z_3) \in \Z_l$ is the order of $q=\frac{n}{l} \mod \Z_l$ in $\Z_l$.
 This completes the proof of Theorem \ref{thm:An}. \end{proof}


\subsection{The $C_n$'s}

\begin{thm} Let $\tilde{G}=Sp(n)$, with center $\Ztwo$, and let $G=\tilde{G}/\Ztwo$. If $n$ is even, then $l_0=1$. And if $n$ is odd, $l_0=2$.
\end{thm}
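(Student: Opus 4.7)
The plan is to follow the template established in the $A_n$ computation, reducing to a mod-$p$ analysis at each prime dividing $|\Gamma|=2$. First, for odd primes $p$, the covering $\pi: Sp(n)\to PSp(n)$ induces an isomorphism on $H^*(\quad;\Zp)$, and since $H^*(Sp(n);\Zp)=\Lambda(x_3,x_7,\ldots,x_{4n-1})$ is primitively generated (hence co-commutative), the commutator map $\phi^*$ vanishes identically in positive degrees by exactly the Hopf-algebra argument already used for $A_n$. Consequently $\tilde\phi^*(z_3)$ is concentrated at the prime $2$, and the whole problem reduces to analyzing its mod-$2$ reduction together with a Bockstein argument.

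For the mod-$2$ analysis I would invoke a Baum-Browder-type description of $H^*(PSp(n);\Ztwo)$: a degree-$1$ generator $x_1$ with Bockstein $\beta(x_1)=y$ (the mod-$2$ reduction of the generator of the $\Ztwo$ summand of $H^2(PSp(n);\Z)$, so that $x_1^2=y$), together with classes $x_{4i-1}$ pulled back from $Sp(n)$. The crucial structural input is the reduced coproduct $\bar\mu^*(x_3)$, whose explicit form depends on the parity of $n$: for $n$ odd it contains a nonzero term of shape $x_1\otimes y$ (in direct analogy with the $A_n$ situation), whereas for $n$ even this term disappears and $x_3$ is coalgebra-primitive modulo tensors of the $x_{4i-1}$'s (which are themselves primitive). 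Running the same six-factor commutator manipulation $\phi^*=(\Delta\!\times\!\Delta)^*(1\!\times\!T\!\times\!1)^*(1\!\times\!1\!\times\!c\!\times\!c)^*(\mu\!\times\!\mu)^*\mu^*$ as in the $A_n$ proof then yields $\phi^*(x_3)=x_1\otimes y - y\otimes x_1$ when $n$ is odd, and $\phi^*(x_3)=0$ when $n$ is even.

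For $n$ odd, a Bockstein-spectral-sequence argument (using $\beta(x_1\otimes x_1)=y\otimes x_1 - x_1\otimes y$) identifies $\phi^*(x_3)$ with the mod-$2$ reduction of a generator of the $\Ztwo$ torsion summand in $H^3(PSp(n)^2;\Z)$, forcing $\tilde\phi^*(z_3)$ to have order $2$ and giving $l_0=2$. A cleaner alternative is to reduce to $n=1$ via the diagonal embedding $Sp(1)\hookrightarrow Sp(n)$, which always sends center to center and so descends to a map $j:PSp(1)\to PSp(n)$; this $j$ has Dynkin index $n$ and therefore induces an isomorphism on $H^3(\quad;\Ztwo)$ (and on $H^1(\quad;\Ztwo)$) precisely when $n$ is odd, allowing direct appeal to the $PSU(2)=SO(3)=\RP^3$ sub-case already handled in Theorem \ref{thm:An}. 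For $n$ even, the vanishing of $\phi^*(x_3)$ mod $2$, combined with vanishing at odd primes and rationally (Remark \ref{remark:rationalcoeffs}), forces the torsion class $\tilde\phi^*(z_3)$ to be zero integrally, yielding $l_0=1$.

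The main obstacle I anticipate is pinning down the mod-$2$ Hopf-algebra structure of $H^*(PSp(n);\Ztwo)$, and in particular verifying the parity-dependence of $\bar\mu^*(x_3)$ rigorously from the fibration $\Ztwo\to Sp(n)\to PSp(n)$ or from a direct Serre spectral sequence computation. Once this structural input is secured, the remainder of the argument is a faithful transcription of the $A_n$ Bockstein machinery, with the case $n$ odd playing the role analogous to $n\equiv 2\pmod 4$ for $SU(n)$, and the case $n$ even playing the ``null'' role in which the mod-$p$ obstruction already vanishes at every prime.
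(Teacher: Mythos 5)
Your overall architecture (reduce to mod-$2$, handle odd primes via co-commutativity) is correct, and you arrive at the right answer, but the primary route you sketch has a genuine gap in the $n$ odd case. The mechanism that drives the $A_n$ Hopf-algebra computation is the relation $\phi^*(x_3)=\tilde\phi^*(\pi^*(x_3))=\tilde\phi^*(z_3)$, which requires a class $x_3\in H^3(G;\Ztwo)$ with $\pi^*(x_3)=z_3$. For $G=PSp(n)$ with $n$ odd ($r=0$ in Baum--Browder's notation), the exterior generator $b_3$ is precisely the one that is \emph{omitted} (since $2^{r+2}-1=3$), so $H^3(PSp(n);\Ztwo)$ is spanned by $v^3$ where $v$ is the degree-$1$ generator; and $\pi^*(v)=0$ forces $\pi^*(v^3)=0$. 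Thus $\pi^*\colon H^3(PSp(n);\Ztwo)\to H^3(Sp(n);\Ztwo)$ is the zero map, there is no candidate $x_3$, and the six-factor commutator manipulation has nothing to grab onto. This is exactly the same obstruction the paper encounters for $PSU(n)$ with $n\equiv 2\pmod 4$ (where $p^r=2$ and the class $x_3$ is likewise absent), which is why that case required the entirely separate smash-product/Samelson-product argument.

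Fortunately your ``cleaner alternative'' is not just cleaner --- it is the only one of your two suggestions that actually works for $n$ odd, and it is precisely the paper's proof: the diagonal $\iota\colon Sp(1)\to Sp(n)$ has Dynkin index $n$ (the basic form restricted to the diagonal $\mathfrak{sp}(1)$ is $n$ times the basic form), so $\iota^*$ and the induced $j^*\colon H^q(PSp(n);\Ztwo)\to H^q(PSp(1);\Ztwo)$ are isomorphisms for $q\leq 3$ when $n$ is odd, and the problem reduces to $SU(2)\to SO(3)$, already settled in the $A_n$ theorem. For $n$ even, your direct Hopf-algebra route is plausible --- the class $b_3$ does exist and $\pi^*(b_3)=z_3$ --- but it requires the coproduct of $b_3$ in $H^*(PSp(n);\Ztwo)$, which is extra structural input not quoted in the paper. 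The paper sidesteps this entirely by pushing the computation forward along the cohomology epimorphism $g\colon Sp(n)\to SU(2n)$, observing that $\tilde\phi^*_{Sp(n)}(g^*z_3)=(\bar g\times\bar g)^*\tilde\phi^*_{SU(2n)/\Ztwo}(z_3)$, which already vanishes by the $A_n$ theorem (since $\mathrm{ord}_2(n)=1$ for $n$ even). That reduction is shorter and avoids having to pin down the coproduct.
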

\begin{proof} First suppose $n$ is even. From the map
$g:Sp(n)\to SU(2n)$, which is known to induce an epimorphism on
cohomology (see \cite{BB}), there is a diagram:
\[
\begin{array}{ccc}
     Sp(n)     &\toby{g} & SU(2n)  \\
\downarrow  &     & \downarrow   \\
 PSp(n)     &\toby{\bar{g}} & SU(2n)/\Ztwo
\end{array}\]
\noindent which shows $\tilde\phi_{Sp(n)}^*(z'_3)=\tilde\phi_{Sp(n)}^*(g^*(z_3))=
(\bar{g}\times\bar{g})^*(\tilde\phi^*_{SU(n)}(z_3))=0$, where $z'_3=g^*(z_3)$ denotes  the generator of $H^3(Sp(n))$.

Before proceeding with the remaining case, recall Baum and
Browder's result on the cohomology of $PSp(n)$~\cite{BB}:

\begin{thm} Write $n=2^r n'$ where $n'$ is odd. As an algebra:
\[H^*(PSp(n);\Ztwo) = \Lambda(b_3,\ldots,\hat{b}_{2^{r+2}-1}, \ldots , b_{4n-1})
    \otimes \Ztwo[v]/(v^{2^{r+2}} )\]
\end{thm}

Now suppose $n$ is odd. The diagonal inclusion $\iota:Sp(1)\to Sp(n)$ induces a
map $j:PSp(1)\to PSp(n)$.  And as in the proof of Theorem \ref{thm:An}, $\iota$ and $j$ induce isomorphisms on $H^q(\quad;\Ztwo)$ in dimensions $q\leq 3$. But $Sp(1)=SU(2)$, and this was done in the last section. Recall that in this case,
$\tilde\phi^*(z_3)$ was the reduction mod $2$ of the generator of
the torsion summand. As $Sp(n)$ has only $2$-torsion, the proof is complete.
\end{proof}


\subsection{The $B_n$'s and the $D_n$'s}

Recall that the $B_n$'s have simply connected representative
$\tilde{G}=Spin(2n+1)$ and center $\Ztwo$, where of course
$PSpin(2n+1)=SO(2n+1)$.
And the $D_n$'s have simply connected representative
$\tilde{G}=Spin(2n)$ whose center is $\Z_4$ if $n$ is odd, and
$\Ztwo\oplus\Ztwo$ if $n$ is even. In this case, the non-simply
connected representatives occur in three types
$PSpin(2n)=PSO(2n)$, $SO(2n)$, and the semi-spinor group $Ss(2n)$, which are
$\tilde{G}/\Gamma$ for $\Gamma$ the full center, and different
central subgroups of order $2$. In particular, (see \cite{IKT}) if $a$ denotes the generator of the kernel of the double cover $Spin(2n)\to SO(2n)$, and $b$ denotes another generator of the center $\Ztwo \oplus \Ztwo$ of $Spin(2n)$, then $Ss(2n) = Spin(2n)/\langle b \rangle$. For later use (in the proof of Theorem \ref{lateruse}), note that there is a covering homomorphism $f:Ss(2n) \to PSO(2n)$.


\subsubsection{$G=SO(n)$, $n\geq 7$}

Consider first $G=SO(n)$, with universal covering group
$\tilde{G}=Spin(n)$. The covering projection $\pi:Spin(n)\to
SO(n)$ induces an epimorphism on $H^3(\quad;\Ztwo)$, and according
to ~\cite{MT}, the generator of $H^3(Spin(n);\Ztwo)$ is $\pi^*(x_3)$, where $x_3\in H^3(SO(n);\Ztwo)$ is primitive.
It therefore suffices to calculate $\phi^*(x_3)$.  But since $x_3$ is
primitive,  $\phi^*(x_3)=0$. This proves:

\begin{thm}For $G=SO(n)$, $l_0=1$.
\end{thm}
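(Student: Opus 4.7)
The plan is to follow the general cohomological strategy laid out at the start of Section~\ref{section:calculation}: show that $\tilde\phi^*(z_3)=0\in H^3(SO(n)^2;\Z)$ for $z_3$ the generator of $H^3(Spin(n);\Z)\cong\Z$, giving $\ker\tilde\phi^*=\Z$ and hence $l_0=1$. By Remark~\ref{remark:rationalcoeffs} the class $\tilde\phi^*(z_3)$ is already torsion, and a classical fact asserts that $H^*(SO(n);\Z)$ carries only 2-primary torsion; together with K\"unneth this means only the prime $2$ can possibly obstruct integral vanishing.

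First I would dispose of odd primes. For each odd $p$, the fibre $\Ztwo$ of $\pi:Spin(n)\to SO(n)$ has trivial $p$-primary cohomology, so $\pi^*$ is an isomorphism on cohomology with $\Zp$ coefficients. Moreover $H^*(Spin(n);\Zp)$ is a primitively generated exterior algebra on odd-degree classes, hence co-commutative as a Hopf algebra. Exactly as in the parenthetical remark at the opening of the proof of Theorem~\ref{thm:An}, this forces $\tilde\phi^*=0$ with $\Zp$ coefficients.

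For the prime $2$ I would invoke \cite{MT}: the covering projection induces a surjection on $H^3(\quad;\Ztwo)$, and the generator of $H^3(Spin(n);\Ztwo)$ is $\pi^*(x_3)$ for a primitive class $x_3\in H^3(SO(n);\Ztwo)$. Since $\pi\tilde\phi=\phi$, the task collapses to showing $\phi^*(x_3)=0$ with $\Ztwo$ coefficients. Primitivity gives $\mu^*(x_3)=x_3\otimes 1+1\otimes x_3$ and $c^*(x_3)=-x_3$; feeding these into the factorization of $\phi$ recalled at the start of Section~\ref{section:calculation} produces four surviving terms of the form $\pm(x_3\otimes 1)\pm(1\otimes x_3)$ that cancel in pairs. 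This is the same cancellation pattern already used in the $A_n$ computation and presents no new difficulty.

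Finally, assembling via the Bockstein spectral sequence, mod~$2$ vanishing combined with the 2-primary nature of all torsion in $H^3(SO(n)^2;\Z)$ forces $\tilde\phi^*(z_3)=0$ integrally, so $l_0=1$. I do not anticipate any genuine obstacle here; the essential input distinguishing this case from $PSO(2n)$ or $Ss(2n)$, where the table records larger values of $l_0$, is precisely the Mimura--Toda statement that $z_3$ admits a \emph{primitive} lift in $H^3(SO(n);\Ztwo)$---without primitivity the cancellation would fail.
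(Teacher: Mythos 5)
Your argument is correct and matches the paper's approach: the decisive input is the Mimura--Toda fact that the generator of $H^3(Spin(n);\Ztwo)$ is $\pi^*(x_3)$ with $x_3\in H^3(SO(n);\Ztwo)$ primitive, whence $\phi^*(x_3)=0$ by the cancellation you describe. The paper states this even more tersely (leaving the odd-prime reduction and the Bockstein bookkeeping implicit), but you have reproduced the same proof with the routine details filled in.
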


\begin{remark}
\begin{enumerate} \item[(a)] This completes the analysis of the
$B_n$'s.

\item[(b)] This completes the analysis of the $SO(2n)$ type within
the $D_n$'s.
\end{enumerate}
\end{remark}


\subsubsection{$G=PSO(2n)$, $n\geq 4$}

\begin{thm}Let $G=PSO(2n)$. If $n$ is odd $l_0=4$. If $n$ is even
$l_0=2$. \label{lateruse}
\end{thm}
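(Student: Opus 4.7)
The plan is to follow the template established in the $A_n$ and $C_n$ cases, reducing the problem to a mod-$p$ computation for primes $p$ dividing $|\Gamma|$, and then piecing together the answer via the Bockstein spectral sequence. Since $\Gamma = Z(Spin(2n))$ has order $4$ (either $\Z_4$ or $\Ztwo \oplus \Ztwo$), the only prime of interest is $p=2$. For odd primes, the covering projection $\pi:Spin(2n)\to PSO(2n)$ induces an isomorphism on mod-$p$ cohomology, and $H^*(Spin(2n);\Zp)$ is an exterior algebra on primitive generators, so that $\phi^*$ vanishes in degree~$3$.

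At the prime $2$, I would invoke the known structure of $H^*(PSO(2n);\Ztwo)$ (Kono--Mimura, or Ishitoya--Kono--Toda) to produce generators $x_1 \in H^1$, $y \in H^2$, and $x_3 \in H^3$ with $\pi^*(x_3) = z_3 \bmod 2$, together with a coproduct formula of the shape
\[ \mu^*(x_3) = x_3\otimes 1 + 1\otimes x_3 + x_1\otimes y, \]
analogous to the Baum--Browder formula used for $PSU(n)$. The exact same calculation as in Theorem \ref{thm:An} (expanding $\phi^* = \mu^* \circ (\mu\times \mu)^* \circ (1\times 1\times c\times c)^* \circ (1\times T\times 1)^* \circ (\Delta\times\Delta)^*$, and using $c^*(x_3) = -x_3 + x_1 y$ together with $c^*(x_1) = -x_1$, $c^*(y)=-y$) should yield
\[ \phi^*(x_3) = x_1\otimes y - y\otimes x_1 \in H^3(PSO(2n)^2;\Ztwo). \]

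The final step is to identify the order of the integral lift of this class via the Bockstein spectral sequence. Here the split between $n$ odd and $n$ even appears: for $n$ odd, $\pi_1(PSO(2n))=\Z_4$, so by the Universal Coefficient Theorem $H^2(PSO(2n);\Z)$ contains a $\Z_4$ summand, and the second Bockstein satisfies $\beta^{(2)}(x_1)=y$, forcing $x_1\otimes y - y\otimes x_1$ to be the mod-$2$ reduction of a generator of a $\Z_4$ summand in $H^3(PSO(2n)^2;\Z)$, giving $l_0 = 4$. For $n$ even, $\pi_1(PSO(2n))=\Ztwo\oplus\Ztwo$ so only the first Bockstein $\beta^{(1)}(x_1)=y$ is non-zero, and $\phi^*(x_3)$ lifts to a class of order exactly $2$, giving $l_0 = 2$.

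The main obstacle will be pinning down the Hopf algebra structure of $H^*(PSO(2n);\Ztwo)$ --- in particular, verifying that the coproduct of $x_3$ really has an $x_1\otimes y$ term (and no other indecomposable contributions that would alter the calculation) --- and then correctly matching the higher Bocksteins of $x_1$ to the parity of $n$. Once these ingredients are in place, the algebraic calculation and the Bockstein argument are entirely parallel to the $A_n$ case, and the covering $f:Ss(2n)\to PSO(2n)$ is not needed here (it will instead feed into the subsequent $Ss(2n)$ computation).
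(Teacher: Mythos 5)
Your plan correctly reproduces the paper's method for the generic residue classes of $n$, but it has a genuine gap at $n\equiv 2 \pmod{4}$. The first step of your computation — identifying $z_3 \bmod 2$ with $\pi^*(x_3)$ for some $x_3\in H^3(PSO(2n);\Ztwo)$ — requires that the covering $\pi:Spin(2n)\to PSO(2n)$ induce an epimorphism on $H^3(\,\cdot\,;\Ztwo)$, and the paper points out that this holds only when $n\not\equiv 2\pmod{4}$. When $n\equiv 2\pmod 4$ (so $2n=4k$ with $k$ odd), there is no such $x_3$, and the direct mod-$2$ computation of $\phi^*$ cannot even be set up; this is exactly the ``check the Hopf algebra structure'' step you flag as an obstacle, and it is not merely an obstacle to sidestep but a case where the method breaks. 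The paper handles this subcase by a completely different route: it uses the covering homomorphism $f:Ss(4k)\to PSO(4k)$ and the resulting identity $(f\times f)^*\tilde\phi^*_{PSO(4k)}=\tilde\phi^*_{Ss(4k)}$, then invokes Theorem~\ref{mapcon} (the $Ss(4n)$ computation) to conclude $\tilde\phi^*_{Ss(4k)}\neq 0$, and hence $\tilde\phi^*_{PSO(4k)}\neq 0$; since $H^3(PSO(4k)^2;\Z)$ has only $2$-torsion this forces $l_0=2$. Your concluding remark that ``the covering $f:Ss(2n)\to PSO(2n)$ is not needed here'' is therefore exactly backwards — it is the essential device for the problematic residue class, and the dependence runs from the $Ss$ theorem into this one, not the other way around. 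A secondary issue worth noting: for $n$ even, $\pi_1(PSO(2n))\cong\Ztwo\oplus\Ztwo$, so $H^1(PSO(2n);\Ztwo)$ is two-dimensional and the single-generator template borrowed from $PSU(n)$ needs more care (multiple degree-one classes, and a more delicate Bockstein analysis) even in the $n\equiv 0\pmod 4$ case where the direct method does apply.
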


\begin{proof} The method is very much like the proof of \ref{thm:An}, relying on the results of Baum and Browder~\cite{BB}
concerning the cohomology of $PSO(2n)$. That is, one finds that
the covering projection $Spin(2n)\to PSO(2n)$ induces an
epimorphism on $H^3(\quad;\Ztwo)$ (when $n\neq 2$ mod $4$). Then one may compute
$\phi^*$ instead, and use the knowledge of the Bockstein spectral
sequence to sort out what happens integrally.

When $n=2$ mod $4$, this method fails. Write $n=4k$, where $k$ is odd. Since $H^3(PSO(4k)^2;\Z)$ contains only 2-torsion, it will suffice to show that $\tilde\phi^*\neq 0$ on $H^3(\quad;\Z)$. To that end, consider the diagram
\[ \begin{array}{ccc}
Spin(4k) & = & Spin(4k) \\
\downarrow & & \downarrow \\
Ss(4k) & \toby{f} & PSO(4k)
\end{array} \]
Therefore, $(f\times f)^*\tilde\phi_{PSO(4k)}^*=\tilde\phi^*_{Ss(4k)}$. Since $\tilde\phi^*_{Ss(4k)}\neq 0$ (by Theorem \ref{mapcon}), the map $\tilde\phi^*_{PSO(4k)}$ is non-zero. \end{proof}


\subsubsection{$G=Ss(4n)$}

\begin{thm} Let $G=Ss(4n)$. If $n$ is odd, $l_0=2$. If $n$ is
even, $l_0=1$. \label{mapcon}
\end{thm}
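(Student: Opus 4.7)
The plan is to parallel the proofs for $PSp(n)$ in Section 5.2 and $PSU(n)$ in Theorem \ref{thm:An}. Since $\pi_1(Ss(4n))=\Z_2$, the group $H^3(Ss(4n)^2;\Z)$ is 2-primary; furthermore, for every odd prime $p$ the covering $\pi: Spin(4n)\to Ss(4n)$ induces an isomorphism on $H^*(\_;\Z_p)$, so cocommutativity of the Hopf algebra $H^*(Spin(4n);\Z_p)$ forces $\tilde\phi^*$ to vanish at odd primes. Everything thus reduces to deciding whether $\tilde\phi^*(z_3)$ vanishes modulo 2, which is equivalent to distinguishing $l_0=1$ from $l_0=2$.

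For $n$ even I would use the half-spin representation. Since $4n\equiv 0\pmod 8$, the half-spin rep $\Delta^+$ of $Spin(4n)$ is real with kernel exactly $\langle b\rangle$, giving an embedding $j:Ss(4n)\hookrightarrow SO(N)$ with $N=2^{2n-1}$. The resulting naturality square
\[
\begin{array}{ccc}
Ss(4n)^2 & \longrightarrow & SO(N)^2 \\
\downarrow \tilde\phi & & \downarrow \tilde\phi \\
Spin(4n) & \stackrel{\iota}{\longrightarrow} & Spin(N)
\end{array}
\]
combined with $\tilde\phi^*_{SO(N)}=0$ (the $SO$-case proved earlier in this section) yields $\iota^*(w_3)\in\ker\tilde\phi^*_{Ss(4n)}$. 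Since the image of $\tilde\phi^*_{Ss(4n)}$ is 2-torsion, $\ker\tilde\phi^*_{Ss(4n)}\subset\Z$ has 2-power index, so $l_0=1$ would follow from verifying that the integer $\iota^*(w_3)\in\Z$ is odd (equivalently, the half-spin rep has odd Dynkin index). If this argument degenerates for larger $n$, I would fall back to a direct calculation of $\phi^*$ on $H^*(Ss(4n);\Z_2)$ using the Hopf algebra structure due to Kono (and Ishitoya--Kono--Toda), imitating the strategy of Theorem \ref{thm:An}.

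For $n$ odd I would mimic the $PSp(n)$, $n$ odd argument. Using the block inclusion $Spin(4)=Sp(1)\times Sp(1)\hookrightarrow Spin(4n)$, consider the first factor $Sp(1)\hookrightarrow Spin(4n)$; by tracking how the center $\Z_2\times\Z_2$ of $Spin(4)$ embeds in the center $\Z_2\times\Z_2$ of $Spin(4n)$, choose the factor so that $-1\in Sp(1)$ maps to $b\in Spin(4n)$, producing an embedding $j:PSp(1)=SO(3)\hookrightarrow Ss(4n)$. I would then verify that $j$ induces an isomorphism on $H^q(\_;\Z_2)$ for $q\leq 3$ and that its lift $Sp(1)\to Spin(4n)$ is an isomorphism on $H^3(\_;\Z_{(2)})$. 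By the $PSU(2)=SO(3)$ case handled in Theorem \ref{thm:An}, $\tilde\phi^*_{PSp(1)}(z_3)$ generates the $\Z_2$ torsion summand, and naturality transfers this nonvanishing to $\tilde\phi^*_{Ss(4n)}(z_3)$, giving $l_0=2$.

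The main obstacle will be the choice of auxiliary homomorphism in each case: for $n$ even, verifying that the chosen map has odd induced action on $H^3$ of universal covers (potentially requiring a finer choice than $\Delta^+$, or a direct computation using the known cohomology of $Ss(4n)$); for $n$ odd, confirming that the block inclusion can be arranged so that the non-trivial central element of $Sp(1)$ lands in $b$ rather than $a$, and that the resulting $SO(3)\hookrightarrow Ss(4n)$ realizes the required low-degree cohomology isomorphisms.
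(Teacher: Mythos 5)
Your overall strategy---reduce the odd-prime part to cocommutativity of the Hopf algebra of $Spin(4n)$, then decide the $2$-primary part by transporting the problem along an auxiliary map---matches the spirit of the paper, but both of your concrete auxiliary maps fail, and the paper's own constructions are genuinely different.

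For $n$ even: the paper does \emph{not} embed $Ss(4n)$ into an $SO(N)$. It simply cites \cite{IKT} to the effect that $\pi:Spin(4n)\to Ss(4n)$ is onto on $H^3(\;\cdot\;;\Ztwo)$ and that the degree-$3$ generator of $H^*(Ss(4n);\Ztwo)$ is \emph{primitive}, so $\phi^*(x_3)=0$ directly (exactly as in the $SO(n)$ subsection). Your half-spin approach cannot succeed here: the Dynkin index of the half-spin representation of $D_{2n}$ is $2^{2n-3}$, which is even for all $n\geq 2$. Thus $\iota^*\colon H^3(Spin(N);\Z)\to H^3(Spin(4n);\Z)$ lands in $2\Z$, so the naturality square only tells you that $\ker\tilde\phi^*_{Ss(4n)}$ contains an even number, which is useless for distinguishing $l_0=1$ from $l_0=2$. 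Your "fallback" (direct Hopf-algebra computation) is what the paper actually does, in the reduced form of a primitivity observation.

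For $n$ odd: the block inclusion $Sp(1)\hookrightarrow Spin(4)\hookrightarrow Spin(4n)$ does not send $-1\in Sp(1)$ into the center of $Spin(4n)$ at all (let alone into $\langle b\rangle$). Concretely, $-1$ in the first $Sp(1)$-factor of $Spin(4)\cong Sp(1)\times Sp(1)$ corresponds to $\pm\omega_4=\pm e_1e_2e_3e_4$, which anticommutes with $e_i$ for $i\leq 4$ and hence is not central in $Spin(4n)$ once $n\geq 2$. So there is no induced Lie group homomorphism $SO(3)\to Ss(4n)$ along this route, and the "main obstacle" you flag is in fact fatal for the block-inclusion plan. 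The paper avoids Lie theory entirely here: it builds a continuous (non-homomorphism) map $g:\RP^3\to Ss(4n)$ cell-by-cell from the CW structure of $\RP^3$, using that $g_*$ is an isomorphism on $\pi_1$ and that $\pi_2(Ss(4n))=0$. Since $g$ is a cohomology isomorphism in degrees $\leq 3$ and the coproducts on $H^3(\;\cdot\;;\Z)$ agree under $g^*$, one gets $\tilde\phi^*_{\RP^3}\tilde g^*=(g\times g)^*\tilde\phi^*_{Ss(4n)}$, reducing to the $SU(2)\to\RP^3$ computation already done in Theorem~\ref{thm:An}. If you want a homomorphism-based route for $n$ odd, you would need an embedded subgroup whose nontrivial central element genuinely equals $b$; the block-diagonal $Sp(1)$ is not it.
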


\begin{proof} If $n$ is even,  the covering projection
$Spin(4n)\to Ss(4n)$ induces an epimorphism on $H^3(\quad;\Ztwo)$ (see \cite{IKT}), and since the generator in $H^3(Ss(4n);\Ztwo)$ is primitive,
$\phi^*=0$.

If $n$ is odd,  there is a map $g:\RP^3 \to Ss(4n)$ (constructed in the next paragraph) that induces a weak homotopy equivalence in degrees  $\leq 3$.  Therefore $g$ is covered by a map $\tilde{g}:S^3 \to Spin(4n)$ that induces an isomorphism on $H^3(\quad;\Z)$. And since $\mu_{\RP^3}^*g^* = (g\times g)^*\mu_{Ss(4n)}^*$ on $H^3(\quad;\Z)$, it follows that $\tilde\phi_{\RP^3}^*\tilde{g}^*=(g\times g)^*\tilde\phi_{Ss(4n)}^*$. Therefore this case reduces to Theorem \ref{thm:An}, and the image of $\tilde\phi^*$ is $2$-torsion.

The map $g$ may be constructed as follows. Recall that $\RP^3$ is obtained (as a CW complex) from $\RP^2$ by attaching a 3-cell with attaching map $S^2\to \RP^2$ the quotient map. Similarly, $\RP^2$ is obtained from $S^1$ by attaching a 2-cell with attaching map $2:S^1\to S^1$, the degree $2$ map.  Let $f:S^1 \to Ss(4n)$ represent a generator of $\pi_1(Ss(4n))=\Ztwo$. The composition $f\circ 2$  is null homotopic, and therefore extends to a map $F: \RP^2 \to Ss(4n)$. And since the composition $S^2 \to \RP^2 \to Ss(4n)$ is null homotopic  (as $\pi_2(Ss(4n))=0$), $F$ extends to a map $g:\RP^3 \to Ss(4n)$. By the structure of the cohomology rings  of $\RP^3$ and $Ss(4n)$ (see \cite{IKT}),  $g$ induces an isomorphism in $H^q(\quad;\Z)$ for $q\leq3$, since $g$ induces an isomorphism on $\pi_1(\quad)$. \end{proof}


\subsection{The Exceptionals}

Of the exceptional groups, only $E_6$ and $E_7$ have non-zero
centers: namely $\Z_3$ and $\Ztwo$ respectively. As usual, let
$PG$ denote the quotient of $G$ by its center.

\begin{thm}For $G=PE_6$, $l_0=3$. And for $G=PE_7$, $l_0=2$.
\end{thm}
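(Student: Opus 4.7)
The argument will closely parallel the $PSU(n)$ case treated in the proof of Theorem \ref{thm:An}. The first step is to observe that for primes $p$ not dividing the order of $\Gamma$ (so $p \neq 3$ for $E_6$, $p \neq 2$ for $E_7$), the covering projection $\pi: \tilde{G} \to G$ induces an isomorphism on $H^*(\,;\Z_p)$. Since $H^*(\tilde{G};\Z_p)$ is a (graded) commutative and co-commutative Hopf algebra at such $p$, the commutator map $\phi$ induces the zero map on mod $p$ cohomology, and hence $\tilde{\phi}^*$ is zero as well. Consequently the only possible torsion in the image of $\tilde{\phi}^*$ lives at $p = 3$ (for $PE_6$) or $p = 2$ (for $PE_7$), and the question reduces to computing $\tilde{\phi}^*$ at those primes.

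Next, I would invoke the known mod $p$ Hopf algebra structure on $H^*(PE_6;\Z_3)$ and $H^*(PE_7;\Z_2)$ (computed, e.g., by Kac and by Baum--Browder \cite{BB} via analogous methods; compare Theorem \ref{thm:BB}). In both cases there is a degree $1$ generator $x_1$ whose Bockstein $\beta(x_1) = y \in H^2(G;\Z_p)$ is the degree $2$ generator (this reflects the fact that $H_1(G;\Z) = \pi_1(G) = \Gamma = \Z_p$), and the pull-back $\pi^*$ sends the integral generator $z_3 \in H^3(\tilde{G};\Z)$ to the mod $p$ degree $3$ generator $x_3$. The essential input is the value of the reduced coproduct on $x_3$: one expects $\bar{\mu}^*(x_3) = x_1 \otimes y$ (up to coefficient in $\Z_p^\times$), exactly as in the $r = s = 1$ situation of Theorem \ref{thm:BB}.

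With this coproduct formula in hand, the computation of $\phi^*(x_3)$ proceeds exactly as in the $A_n$ case: pulling back through the decomposition $\phi = \mu \circ (\mu \times \mu) \circ (1{\times}1{\times}c{\times}c) \circ (1{\times}T{\times}1) \circ (\Delta \times \Delta)$ using $c^*(x_3) = -x_3 + x_1 y$ yields
\[ \phi^*(x_3) = x_1 \otimes y - y \otimes x_1 \in H^3(G^2;\Z_p). \]
Then the Bockstein identity $\beta(x_1 \otimes x_1) = y \otimes x_1 - x_1 \otimes y$ shows that this class is the reduction mod $p$ of a generator of the $\Z_p$ torsion summand in $H^3(G^2;\Z)$ detected by the Bockstein spectral sequence. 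Hence the obstruction $\tilde{\phi}^*(z_3)$ has order exactly $p$, giving $l_0 = 3$ for $PE_6$ and $l_0 = 2$ for $PE_7$.

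The main obstacle is the explicit Hopf algebra structure, particularly the reduced coproduct on the degree $3$ generator of $H^*(PE_6;\Z_3)$ and $H^*(PE_7;\Z_2)$. These facts are not derived in this paper and must be quoted from the literature on the mod $p$ cohomology of exceptional Lie groups; once they are established, the remainder of the argument is formal and identical to the calculation used for $PSU(n)$.
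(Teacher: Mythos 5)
Your argument is correct for $PE_6$ and matches the paper's approach: at $p=3$, the covering $E_6 \to PE_6$ is surjective on $H^3(\,;\Z_3)$, there is a primitive-type generator $x_3$ with reduced coproduct $x_1 \otimes y$, and the calculation of $\phi^*(x_3) = x_1\otimes y - y\otimes x_1$ together with the Bockstein $\beta(x_1)=y$ gives $l_0 = 3$.

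However, the proposal has a genuine gap for $PE_7$. You assume that the generic $r=s=1$ picture of Theorem \ref{thm:BB} persists --- namely that there is an $x_3 \in H^3(G;\Z_p)$ with $\pi^*(x_3) = z_3$ and $\bar\mu^*(x_3) = x_1\otimes y$. But look at what Theorem \ref{thm:BB} actually says when $p=2$ and $r=1$: the generator $x_{2p^r-1} = x_3$ is \emph{omitted} from the exterior part, and the additional relation $y = x_1^2$ holds. This is precisely the $n \equiv 2 \pmod 4$ situation for $PSU(n)$, which the paper could not handle by the coproduct calculation and instead required the much harder homotopy-theoretic argument via $\RP^3 \wedge \RP^3 \to S^3$. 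For $PE_7$ the same phenomenon occurs: $H^3(PE_7;\Z_2)$ is generated by $x_1^3$, whose coproduct is $\bar\mu^*(x_1^3) = x_1\otimes x_1^2 + x_1^2\otimes x_1$, a \emph{symmetric} element that the commutator map annihilates, and the covering $\pi\colon E_7 \to PE_7$ (analogous to $S^3 \to \RP^3$) induces the zero map on $H^3(\,;\Z_2)$. So there is no class $x_3$ with $\pi^*(x_3) = z_3$ and the reduction $\tilde\phi^*(z_3) \leadsto \phi^*(x_3)$ does not apply. The paper's actual proof for $PE_7$ sidesteps this by constructing a map $g\colon \RP^3 \to PE_7$ that is a homotopy $3$-equivalence (exactly as in Theorem \ref{mapcon} for $Ss(4n)$, $n$ odd), thereby reducing the nonvanishing of $\tilde\phi^*_{PE_7}$ to the already-established nonvanishing of $\tilde\phi^*_{\RP^3}$. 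To repair your proof you would need either to carry out the mod-$2$ homotopy argument directly for $PE_7$, or to cite and use the $\RP^3 \to PE_7$ reduction. (As a minor point, also note that $\pi^*$ runs $H^*(G) \to H^*(\tilde{G})$, so the condition should read $\pi^*(x_3) = z_3$, not the other way around.)
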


\begin{proof} Kono \cite{Ko} provides the necessary information about the cohomology of $PE_6$ and $PE_7$. In
particular, the covering projection $E_6\to PE_6$ is onto on
$H^3$, so it suffices to compute $\phi^*$ instead. The
calculation is similar to the ones of the previous section, and one finds again that $\phi^*(x_3)$ is the reduction mod $3$ of the
torsion summand.

For $PE_7$, there is a map $g:\RP^3\to PE_7$ which is a weak homotopy 
equivalence dimensions $\leq 3$, as in the proof of Theorem \ref{mapcon}.  \end{proof}


\appendix


\section{Pre-quantization of Hamiltonian loop group manifolds} \label{sec:loopgroup}

For a compact Lie group $G$, \cite{AMM} shows that there is a bijective
correspondence between quasi-Hamiltonian spaces with group-valued
moment map and Hamiltonian loop group spaces. This correspondence is discussed next, in relation to pre-quantization when $G$ is simple and simply connected.

It may be useful to quickly review some background concerning loop groups, and their central extensions.  Technical issues concerning smoothness conditions on  mapping spaces are ignored; the reader may consult  \cite{MW}, where these details are considered.

Let $LG=\mathrm{Map}(S^1,G)$, and let $\Lg =
\mathrm{Map}(S^1,\mathfrak{g})$ denote its Lie algebra. Let
$(\cdot,\cdot)$ be an invariant inner product on $\mathfrak{g}$.

Define $\Lg^*:=\Omega^1(S^1;\mathfrak{g})$ with pairing $\Lg\times
\Lg^*\to \R$ given by $(\xi,A)\mapsto \int_{S^1}(\xi,A)$, which
identifies $\Lg^*\subset (\Lg)^*$.

  Let $\Lghat=\Lg\oplus \R$ with Lie bracket determined by:
\[ [(\xi_1,t_1),(\xi_2,t_2)]:= ([\xi_1,\xi_2], \int_{S^1}(\xi_1,d\xi_2))
\]

Dually, define $\Lghat^*:=\Lg^*\oplus\R$ and consider the pairing
$\Lghat\times \Lghat^*\to\R$ given by $((\xi,a), (A,\lambda)) = \int_{S^1} (\xi,A)+ a\lambda$ .

The following result is well known (see \cite{PS}).
\begin{thm} There is a unique (non-trivial) central extension $\LGhat$ of
$LG$ by $S^1$ with Lie algebra $\Lghat$ if and only if the inner product $(\cdot,\cdot)$ is $kB$, where $k \in \N$, and $B$ denotes the basic inner product. The coadjoint action of
$\LGhat$ on $\Lghat^*$ factors through $LG$, and is explicitly
given by:
\[g\cdot(A,\lambda)=(\mathrm{Ad}_g(A)-\lambda g^*\theta^R,\lambda)   \]
\end{thm}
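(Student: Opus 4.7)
The plan is to prove this standard result from loop group theory in two stages. First I would establish when the Lie algebra extension $\Lghat$ integrates to a group-level $S^1$-central extension of $LG$, and then derive the coadjoint action formula by computing the adjoint action and dualizing. The whole argument assumes $G$ is simple and simply connected, which is the setting Pressley--Segal consider.

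For the first stage, one checks directly (using integration by parts on $S^1$ together with $\mathrm{Ad}$-invariance of $(\cdot,\cdot)$) that the prescribed bracket on $\Lghat$ is indeed a Lie bracket, i.e. that $\omega(\xi_1,\xi_2) = \int_{S^1}(\xi_1,d\xi_2)$ is a Lie algebra 2-cocycle on $\Lg$. The left-invariant 2-form on $LG$ corresponding to $\omega$ integrates to a circle-valued cocycle on $LG$ (and hence produces the desired central $S^1$-extension) if and only if its de Rham class lies in the image of the integral cohomology $H^2(LG;\Z)\to H^2(LG;\R)$. The key input is the transgression isomorphism: for $G$ simple and simply connected, $H^3(G;\Z)\cong\Z$ maps isomorphically onto the primitive part of $H^2(LG;\Z)$, and under this map the generator (represented by $\eta_1$) corresponds precisely to the cocycle class associated to the basic inner product $B$. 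Since the assignment ``inner product $\mapsto \omega$'' is linear and, at the level of the 3-form on $G$, sends $kB$ to $k\eta_1$, the class $[\omega]$ is integral if and only if $(\cdot,\cdot) = kB$ with $k\in\Z$. Positivity of $(\cdot,\cdot)$ then forces $k\in\N$. Uniqueness is immediate: once the Lie algebra extension is fixed, isomorphism classes of integrating group-level $S^1$-extensions form a torsor over $H^1(LG;S^1)$, which vanishes because $LG$ is connected (as $G$ is simply connected) and $\pi_1(LG)\cong\pi_2(G)=0$ for $G$ simple.

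For the second stage, the central $S^1\subset\LGhat$ acts trivially on $\Lghat$ by conjugation (being central), so both the adjoint and coadjoint actions factor through $LG$. Choose any set-theoretic lift $g\mapsto \hat g$ of $LG\to\LGhat$; then conjugation gives
\[
\mathrm{Ad}_{\hat g}(\xi,t) \;=\; (\mathrm{Ad}_g\xi,\; t+\sigma(g,\xi))
\]
for some linear functional $\sigma(g,\cdot)$ on $\Lg$. Differentiating the defining cocycle relation $\omega(\mathrm{Ad}_g\xi_1,\mathrm{Ad}_g\xi_2)-\omega(\xi_1,\xi_2) = $ boundary, or equivalently by identifying $\sigma(g,\cdot)$ with the pullback of the right Maurer--Cartan form via $(\cdot,\cdot)$, one obtains $\sigma(g,\xi) = -\int_{S^1}(\xi, g^*\theta^R)$. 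Dualizing via the pairing $((\xi,a),(A,\lambda)) = \int_{S^1}(\xi,A)+a\lambda$ gives exactly
\[
g\cdot(A,\lambda) \;=\; (\mathrm{Ad}_g(A) - \lambda\, g^*\theta^R,\;\lambda).
\]

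The main obstacle is the integrality claim in the first stage, which rests on correctly matching the transgression $H^3(G;\Z)\to H^2(LG;\Z)$ with the normalization chosen for the cocycle $\omega$ and for the canonical 3-form $\eta$. One natural way to close this gap is to compute periods directly: represent a generator of $H_2(LG;\Z)$ by a map $S^2\to LG$ adjoint to some $f:S^3\to G$ generating $\pi_3(G)\cong\Z$, and verify that the integral of $\omega$ over this cycle equals the integral of $\eta_1$ over $f(S^3)$, which is $1$ by the normalization of $\eta_1$. Everything else in the argument is essentially formal once this period calculation is in place.
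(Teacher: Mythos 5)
The paper does not actually prove this statement: it is presented as background with the remark ``The following result is well known (see \cite{PS})'' and is cited from Pressley--Segal's book on loop groups. So there is no ``paper's own proof'' to compare against, and your proposal has to be judged on its own merits.

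Your outline is essentially the standard argument and is in the spirit of Pressley--Segal. The reduction of the existence question to integrality of the left-invariant $2$-form on $LG$, the computation $H^2(LG;\Z)\cong H^2(\Omega G;\Z)\cong H^3(G;\Z)\cong\Z$ via the evaluation splitting $LG\simeq \Omega G\times G$ (using $H^1(G)=H^2(G)=H^1(\Omega G)=0$ for $G$ simply connected simple) and the transgression, and the closing period check against $\eta_1$ --- these are exactly the right ingredients, and the uniqueness argument via $\pi_1(LG)\cong\pi_2(G)=0$ is correct (though one should be precise that ``$H^1(LG;S^1)$'' here means the continuous/smooth group cohomology parametrizing characters of $\pi_1(LG)$, not sheaf cohomology of $S^1$-valued functions; with $\pi_1(LG)=0$ both vanish so the conclusion is unaffected). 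The derivation of the coadjoint formula by lifting, computing $\mathrm{Ad}_{\hat g}$, and dualizing via the stated pairing is also the standard route.

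The one place to double-check is the sign and the form ($\theta^L$ versus $\theta^R$) appearing in your cocycle $\sigma(g,\xi)$. If one adopts the usual convention $g\cdot\mu=\mathrm{Ad}_{g^{-1}}^*\mu$ and the pairing $\bigl((\xi,a),(A,\lambda)\bigr)=\int_{S^1}(\xi,A)+a\lambda$, then matching against the target formula $g\cdot(A,\lambda)=(\mathrm{Ad}_g A-\lambda\, g^*\theta^R,\lambda)$ forces $\sigma(g^{-1},\xi)=-\int_{S^1}(\xi,g^*\theta^R)$, i.e.\ $\sigma(g,\xi)=\int_{S^1}(\xi,g^*\theta^L)$, since $(g^{-1})^*\theta^R=-g^*\theta^L$. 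Your stated $\sigma(g,\xi)=-\int_{S^1}(\xi,g^*\theta^R)$ would instead produce $(\mathrm{Ad}_gA-\lambda\,(g^{-1})^*\theta^R,\lambda)$. This is purely a bookkeeping matter in the choice of dualization convention, not a conceptual gap, but it is worth fixing so the derived formula literally matches the statement. Everything else in the outline is sound.
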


Identifying $\Lg^*$ with $\Lg^* \times \{ \lambda \} \subset \Lghat^*$ gives an action of $LG$ on $\Lg^*$, which is called the level $\lambda$ action.  When $\lambda=1$, this action coincides with the usual action of $LG$ (viewed as the group of gauge transformations) on the space of connections on the trivial principal $G$ bundle over $S^1$.

From now on, $\LGhat$ will denote the basic central extension (i.e. corresponding to $k=1$). Note that this central extension is simply connected.

\begin{defn} A Hamiltonian loop group manifold at level $\lambda \in \R$ is a triple
$(\mathcal{M},\sigma,\varphi)$ consisting of a symplectic Banach manifold
$(\mathcal{M}, \sigma)$ equipped with a Hamiltonian $LG$ action, with equivariant moment map $\varphi:\mathcal{M} \to \Lg^*$  satisfying the moment map condition
$\iota_{\xi^\sharp}\sigma = d(\varphi,\xi)$.  (Note that $\Lg^*$ comes equipped with the level $\lambda$ action of $LG$.)
\end{defn}

The $LG$-space $\mathcal{M}$ is often viewed as an $\LGhat$ space
where the central circle acts trivially, with constant moment map $\lambda$.

\begin{defn} A pre-quantization of $(\mathcal{M},\sigma,\varphi)$, a Hamiltonian $LG$-space at level $\lambda$, is an $\LGhat$-equivariant
principal $U(1)$-bundle $\pi:P\to \mathcal{M}$ equipped with an $\LGhat$-invariant connection $A$  satisfying  $\pi^*\sigma= -dA $, and $(\pi^*\varphi,\zeta)= A(\zeta^\sharp)$, where $\zeta^\sharp$ is the generating vector field on $P$ for $\zeta\in\Lghat$.
\end{defn}

\begin{remark} \label{remark:obs}
A pre-quantization of a Hamiltonian $LG$-space places restrictions on both the level $\lambda$ (which must be an integer)  and the symplectic form $\sigma$ (which must be integral). Indeed, since the moment map for the (trivial) circle action on $\mathcal{M}$ is the constant  $\lambda$, one sees that corresponding circle action on the fibers of  $\pi:P\to \mathcal{M}$  is of weight $\lambda$, and must therefore be an integer.
\end{remark}

Since the based loop group $\Omega G\subset LG$ acts freely on
$\Lg^*$, by equivariance of $\varphi$, it acts freely on
$\mathcal{M}$; therefore there is a pull-back diagram:
\[ \begin{array}{ccl}
\mathcal{M} & \toby{\varphi} & \Lg^* \\
q \downarrow       &              &  \downarrow \Hol \\
M           & \toby{\phi}    & G \\
\end{array} \]
When $\varphi$ is proper, the quotient $M=\mathcal{M}/\Omega G$ is
a finite dimensional manifold.  The following result of \cite{AMM} describes the
correspondence between quasi-Hamiltonian spaces with group-valued
moment map and Hamiltonian loop group spaces.

\begin{thm}(Equivalence Theorem) Let
$(\mathcal{M},\sigma,\varphi)$ be a Hamiltonian $LG$-space with
proper moment map $\varphi$, and let $M$ and $\phi$ be as above.
There is a unique two-form $\omega$ on $M$ with $q^*\omega =
\sigma + \varphi^*\varpi$ with the property that $(M,\omega,\phi)$
is a quasi-Hamiltonian $G$-space. Conversely, given a quasi-Hamiltonian $G$-space
$(M,\omega,\phi)$  there is a
unique Hamiltonian $LG$-space $(\mathcal{M},\sigma,\varphi)$ such
that $M=\mathcal{M}/\Omega G$.
\end{thm}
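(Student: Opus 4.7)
The plan is to establish the correspondence in both directions, exploiting the fact that the based loop group $\Omega G$ acts freely on $\Lg^*$ (at level one) by gauge transformations, with orbit space $\Lg^*/\Omega G\cong G$ realised by the holonomy map $\Hol$. Given a Hamiltonian $LG$-space $(\mathcal{M},\sigma,\varphi)$ with proper moment map, equivariance of $\varphi$ transfers the free $\Omega G$-action to $\mathcal{M}$, and properness guarantees that $M=\mathcal{M}/\Omega G$ is a finite-dimensional manifold and that $\mathcal{M}\to M$ is a principal $\Omega G$-bundle fitting into the pull-back square of the statement, with $\phi:M\to G$ the map induced by $\varphi$ on orbit spaces.

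The central technical input is a distinguished $\Omega G$-invariant $2$-form $\varpi$ on $\Lg^*$ satisfying $d\varpi+\Hol^*\eta=0$ together with a moment-map-type identity of the form $\iota_{\xi^\sharp}\varpi=-\half\,\Hol^*(\theta^L+\theta^R,\xi_0)-d(A,\xi-\xi_0)$, where $\xi=\xi_0+\tilde\xi\in\Lg$ is split into its constant-loop part and its based part. With this form in hand, define $\omega$ on $M$ by declaring $q^*\omega=\sigma+\varphi^*\varpi$; the $LG$-moment map identity $\iota_{\xi^\sharp}\sigma=d(\varphi,\xi)$ combined with the above identity shows that $\sigma+\varphi^*\varpi$ is horizontal and $\Omega G$-invariant and therefore descends to $M$. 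Conditions (1) and (2) of Definition~\ref{defn:qhspace} then follow formally from the two defining properties of $\varpi$, from $d\sigma=0$, and from the moment map condition applied to constant loops.

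For the converse, reconstruct $\mathcal{M}$ as the pull-back $M\times_G\Lg^*$; the $LG$-action on the second factor lifts since $\Omega G$ acts only on $\Lg^*$, and the projection to $\Lg^*$ serves as a proper moment map $\varphi$. The $2$-form $\sigma:=q^*\omega-\varphi^*\varpi$ is closed by (1) and the identity $d\varpi+\Hol^*\eta=0$, is $LG$-invariant, and satisfies the Hamiltonian moment map condition by (2) together with the defining property of $\varpi$. Uniqueness in either direction is immediate since $q$ is a surjective submersion and $\mathcal{M}$ is characterised up to canonical isomorphism by the pull-back. The main obstacle, and the only genuinely non-formal step, is verifying that the constructed $\sigma$ is weakly non-degenerate on the Banach manifold $\mathcal{M}$; this is precisely where the minimal-degeneracy axiom (3) for quasi-Hamiltonian spaces is used, through a pointwise linear-algebra comparison of $\ker\sigma_p$ with $\ker\omega_q\cap\ker d\phi|_q$ for $q=\pi(p)$, together with a matching calculation for the vertical directions of $\mathcal{M}\to M$ via the moment map $\varphi$.
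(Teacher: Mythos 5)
The paper does not prove the Equivalence Theorem; it quotes it verbatim as a result of \cite{AMM}, so there is no internal argument to compare your sketch against. What you have written is instead an outline of the proof from that reference, and the architecture you describe is essentially correct: introduce a $2$-form $\varpi$ on $\Lg^*$ (viewed as the affine space of connections on the trivial $G$-bundle over $S^1$) with $d\varpi+\Hol^*\eta=0$ and a suitable contraction identity; use it to pass back and forth between $\sigma$ and $\omega$ via $q^*\omega=\sigma+\varphi^*\varpi$; and check that axioms (1)--(2) of Definition~\ref{defn:qhspace} on the finite-dimensional side match closedness and the Hamiltonian moment condition on the loop group side, with axiom (3) encoding weak non-degeneracy of $\sigma$.

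Two caveats are worth flagging. First, the precise form of the contraction identity you write down --- splitting $\xi\in\Lg$ into a constant part $\xi_0$ and a based part and assigning each a separate contribution --- is plausible in spirit but is not obviously the formula that actually appears in \cite{AMM}; the split is not canonical (evaluation at the basepoint versus averaging both present themselves), and the signs and the role of the pairing $(A,\cdot)$ need to be pinned down carefully for the cancellation that makes $\sigma+\varphi^*\varpi$ basic to go through. In a full proof this identity has to be \emph{derived}, typically via a path-space construction, not merely postulated; your phrase ``the central technical input'' correctly identifies where the work lies but does not discharge it. Second, describing non-degeneracy as ``the only genuinely non-formal step'' undersells the construction of $\varpi$ itself: verifying both $d\varpi=-\Hol^*\eta$ and the moment-type identity is a nontrivial computation and is logically prior to everything else. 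With those caveats, your outline is a sound road map for the argument that \cite{AMM} actually carries out.
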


Before relating the pre-quantization of Hamiltonian loop group spaces with quasi-Hamiltonian $G$-spaces, consider the following situation. Suppose that $(\mathcal{M}, \sigma, \varphi)$ is a level $k$ ($k\in\N$) Hamiltonian $LG$-space, and suppose $\pi:P\to \mathcal{M}$ is a principal $U(1)$ bundle with connection $A$, whose curvature is $\sigma$.  The formula
\[ \zeta^\sharp = \mathrm{Lift}(\zeta^\sharp) + 
				(\pi^*\varphi,\zeta)\frac{\partial}{\partial \theta}	 \]
due to Kostant \cite{Kos} gives a lift of the $\Lghat$-action on $\mathcal{M}$ to an $\Lghat$-action on $P$, which preserves $A$. Here, $\mathrm{Lift}(X)$ denotes the horizontal lift of the vector field $X$ on $\mathcal{M}$ determined by the connection $A$, $\zeta^\sharp$ denotes the generating vector field of $\zeta\in \Lghat$ (on $\mathcal{M}$ or $P$, depending on the context), and $\frac{\partial}{\partial \theta}$ denotes the generating vector field of the $U(1)$-action on $P$.

As $\LGhat$ is connected and simply connected, the $\Lghat$-action on $P$ integrates to an $\LGhat$-action, yielding an $\LGhat$-equivariant principal $U(1)$-bundle satisfying the conditions of a pre-quantization.  In particular, the obstructions in remark \ref{remark:obs} are the only ones (compare with Proposition \ref{prop:eq-pqispq}):

\begin{prop} Suppose that $(\mathcal{M},\sigma,\varphi)$ is a level $k$ Hamiltonian $LG$-space ($k\in\N$).  If the cohomology class $[\sigma]\in H^2(\mathcal{M};\R)$ is integral, then $(\mathcal{M},\sigma,\varphi)$ admits a pre-quantization. 
\end{prop}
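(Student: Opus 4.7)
The plan is essentially to execute the construction described in the paragraph immediately preceding the proposition, verifying that each step goes through given the two hypotheses (integrality of $[\sigma]$ and integrality of the level $k$).

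First, I would invoke the classical Weil/Kostant correspondence: since $[\sigma] \in H^2(\mathcal{M};\R)$ is integral, there exists a principal $U(1)$-bundle $\pi:P\to \mathcal{M}$ with a connection $A$ satisfying $\pi^*\sigma = -dA$. This is the usual ordinary (non-equivariant) pre-quantization of the symplectic manifold $(\mathcal{M},\sigma)$, and uses only the integrality hypothesis on $[\sigma]$.

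Second, I would use Kostant's formula
\[ \zeta^\sharp = \mathrm{Lift}(\zeta^\sharp) + (\pi^*\varphi,\zeta)\frac{\partial}{\partial\theta} \]
to define, for each $\zeta \in \Lghat$, a vector field on $P$ lifting the generating vector field of $\zeta$ on $\mathcal{M}$. The key verifications here are: (a) that $\zeta \mapsto \zeta^\sharp$ is a Lie algebra homomorphism $\Lghat \to \mathrm{Vect}(P)$, and (b) that each $\zeta^\sharp$ preserves the connection $A$. For (a), one computes the Lie bracket of two such vector fields directly; the discrepancy between the bracket of the horizontal lifts and the horizontal lift of the bracket on $\mathcal{M}$ is measured by $\sigma$ evaluated on the generating vector fields, and the moment map condition $\iota_{\xi^\sharp}\sigma = d(\varphi,\xi)$ together with the cocycle $\int_{S^1}(\xi_1, d\xi_2)$ defining $\Lghat$ are precisely matched up so that the central term of $[\zeta_1,\zeta_2]_{\Lghat}$ is absorbed into the vertical direction. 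This is exactly the point where the central extension appears, and is the main step to check carefully.

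Third, I would integrate the $\Lghat$-action to an $\LGhat$-action on $P$. Since $\LGhat$ is connected and simply connected (as noted in the paper), Lie's second theorem (in the version suitable for mapping-type Banach-Lie groups) guarantees that the infinitesimal action integrates. The central $U(1) \subset \LGhat$ acts along the fibers of $\pi$ with weight determined by the constant moment map $\lambda = k$; since $k \in \N$, this action is well-defined as a circle action (this is where the level hypothesis enters). Finally, the moment map condition $(\pi^*\varphi,\zeta) = A(\zeta^\sharp)$ is immediate from Kostant's formula, as $A$ annihilates horizontal lifts and $A(\partial/\partial\theta)=1$. This assembles to the desired pre-quantization.

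The main obstacle is the bracket verification in step two: matching the cocycle in the definition of $\Lghat$ to the curvature correction that arises in lifting a Lie algebra action to a principal $U(1)$-bundle. Once this infinitesimal calculation is done, the simple connectivity of $\LGhat$ makes the integration step automatic, and the other conditions are built into Kostant's formula by design.
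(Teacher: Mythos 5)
Your proposal is correct and follows essentially the same route as the paper, which proves this proposition in the paragraph preceding its statement: take a $U(1)$-bundle with connection whose curvature is $\sigma$, lift the $\Lghat$-action to the total space via Kostant's formula, and integrate to an $\LGhat$-action using the fact that $\LGhat$ is simply connected. The additional detail you supply about the bracket verification — that the curvature correction in lifting the infinitesimal action matches the cocycle defining $\Lghat$ precisely because of the equivariance of $\varphi$ at level $k$ — is exactly what the paper's citation to Kostant is carrying, so you have merely unpacked the reference rather than deviated from the argument.
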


The above proposition and the theorem below are based on the unpublished work of A. Alekseev, and E. Meinrenken \cite{AM}.

\begin{thm} \cite {AM} Under the correspondence in the Equivalence Theorem, a pre-quantization of the quasi-Hamiltonian $G$-space $(M,\omega,\phi)$ corresponds to a
pre-quantization of the Hamiltonian
$LG$-space $(\mathcal{M},\sigma,\varphi)$. 
\end{thm}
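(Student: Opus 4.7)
The plan is to exploit the pull-back square
\[ \begin{array}{ccc}
\mathcal{M} & \toby{\varphi} & \Lg^* \\
q \downarrow       &              &  \downarrow \Hol \\
M           & \toby{\phi}    & G \\
\end{array} \]
and track cohomology classes under the induced map of mapping cones $H^*(\phi;\,\cdot) \to H^*(\varphi;\,\cdot)$. The key observation is that $\Lg^*$ is contractible, so the long exact sequence of the mapping cone of $\varphi$ collapses to an isomorphism $H^3(\varphi;\Z) \cong H^2(\mathcal{M};\Z)$ (and similarly with real coefficients). A de Rham representative $(\alpha,\beta) \in \Omega^2(\mathcal{M}) \oplus \Omega^1(\Lg^*)$ for a class in $H^3(\varphi;\R)$ is identified, under this isomorphism, with $[\alpha + \varphi^*\gamma] \in H^2(\mathcal{M};\R)$ where $\gamma$ is any primitive of $\beta$.

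For the forward direction, begin with an integral pre-quantization $\alpha \in H^3(\phi;\Z)$ of $(M,\omega,\phi)$ and pull it back to a class $\tilde\alpha \in H^3(\varphi;\Z)$. In the mapping cone $\Omega^*(\varphi)$ one has
\[ (q^*\omega,\Hol^*\eta) + d(0,-\varpi) = (q^*\omega - \varphi^*\varpi,0) = (\sigma,0), \]
using the defining property $d\varpi = \Hol^*\eta$ of $\varpi$, so under $H^3(\varphi;\Z) \cong H^2(\mathcal{M};\Z)$ the class $\tilde\alpha$ is an integral lift of $[\sigma]$. Moreover, the lemma preceding Proposition \ref{prop:restateprob} forces the level $k$ to be a positive integer, matching the integrality required by Remark \ref{remark:obs}. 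Now invoke the preceding proposition: Kostant's formula yields an $\Lghat$-action on the line bundle associated to $\tilde\alpha$ covering the $\Lghat$-action on $\mathcal{M}$, and since $\LGhat$ is connected and simply connected this integrates to an $\LGhat$-action, producing the desired pre-quantization of $(\mathcal{M},\sigma,\varphi)$.

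For the converse, an $\LGhat$-equivariant pre-quantization $\pi:P\to\mathcal{M}$ produces an integral class $[\sigma] \in H^2(\mathcal{M};\Z)$, hence a class $\tilde\alpha \in H^3(\varphi;\Z)$ via the isomorphism above; one must lift $\tilde\alpha$ along $H^3(\phi;\Z) \to H^3(\varphi;\Z)$ to obtain the pre-quantization on $M$. The existence of such a lift relies on two inputs: first, Remark \ref{remark:obs} guarantees that the level $k$ is an integer, which provides an integral lift of $[\eta]=k[\eta_1] \in H^3(G;\Z)$; second, the $\LGhat$-equivariance of $P$ encodes, by the standard transgression correspondence between central extensions of $LG$ and gerbes on $G$, a compatible trivialization over $\Lg^*$ of the basic gerbe pulled back under $\Hol$. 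Combining these with the bundle $P$ and descending under the principal $\Omega G$-bundle $q:\mathcal{M} \to M$ produces the required relative class in $H^3(\phi;\Z)$ lifting $[(\omega,\eta)]$. The main obstacle is precisely this descent step: one must verify that the $\LGhat$-equivariant data on $P$ is compatible enough with the canonical trivialization of $\Hol^*(\text{basic gerbe})$ for the resulting class on $M$ to patch with the integral lift of $[\eta]$ on $G$ into a genuine relative cocycle. This is the technical heart of the equivalence and is exactly what the transgression isomorphism (equivariantly refined) supplies.
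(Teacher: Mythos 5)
Your forward direction is essentially the paper's argument: pull back $[(\omega,\eta)]$ along $(q,\Hol)$, observe that $(q^*\omega,\Hol^*\eta)$ and $(\sigma,0)$ differ by $d(0,\varpi)$, and invoke Kostant's formula (via the preceding proposition) to integrate the infinitesimal $\Lghat$-action on the line bundle to an $\LGhat$-action since $\LGhat$ is connected and simply connected. That part is sound.

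The converse, however, has a genuine gap. You try to manufacture the required class in $H^3(\phi;\Z)$ by combining the integer level with an ``equivariant transgression'' trivialization of $\Hol^*$ of the basic gerbe and then descending along $q:\mathcal{M}\to M$, and you explicitly flag the patching step as an unresolved ``main obstacle.'' This is not a proof; it is a sketch of a possible proof strategy whose crucial step is deferred to machinery (``the transgression isomorphism, equivariantly refined'') that is neither stated nor established. The missed observation that makes the whole business trivial is that the map $(q,\Hol)^*:H^3(\phi;\Z)\to H^3(\varphi;\Z)$ is in fact an \emph{isomorphism}, not merely a map along which one hopes to lift. Since the square is a homotopy pull-back with $\Lg^*$ contractible, $\mathcal{M}$ is the homotopy fiber of $\phi:M\to G$; because $G$ is $2$-connected and $\mathcal{M}$ is connected, Ganea's theorem (or, alternatively, the Serre exact sequence for $\mathcal{M}\to M\to G$) shows the map of pairs is a homotopy equivalence through dimension $3$, and hence $(q,\Hol)^*$ is an isomorphism on $H^3$ with any coefficients. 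Combined with the isomorphism $H^3(\varphi;\Z)\cong H^2(\mathcal{M};\Z)$ you already noted, this gives immediately that $[(\omega,\eta)]$ admits an integral lift if and only if $[\sigma]$ does, which (via Propositions~\ref{prop:eq-pqispq} and the preceding proposition on loop group spaces) is exactly the asserted correspondence; no gerbe-theoretic descent is needed, and both directions come for free.
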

\begin{proof} By the previous proposition it suffices to show that $[(\omega,\eta)]$ is integral if and only if $[\sigma]$ is integral.
The above pull-back diagram induces a map $(q,\Hol)^*:H^3(\phi)\to H^3(\varphi)$, which  is an isomorphism for connectivity reasons. Indeed, the above pull-back diagram identifies $\mathcal{M}$ as the homotopy fiber of the map $\phi:M\to G$. The map of pairs $(q,\Hol)$ is homotopy equivalence in dimensions $\leq q$ if and only if the map of pairs $(\varphi,\phi)$ is a homotopy equivalence in dimensions $\leq q$.  By Ganea's Theorem (see \cite{S}) $(\varphi,\phi)$ is a homotopy equivalence in dimensions $\leq 3$, as $G$ is $2$-connected and $\mathcal{M}$ is connected.
Alternatively, one could argue with the Serre exact sequence for the homotopy fibration $\mathcal{M}\to M\to G$.

Now, the connecting map $\delta:H^2(\mathcal{M})\to H^3(\varphi)$ in the long exact sequence in cohomology for the map $\varphi$ is an isomorphism, induced from the chain map $\alpha\mapsto (\alpha,0)$. Thus the theorem is proved after observing:
\begin{align*}
 (q,\Hol)^*[(\omega,\eta)] & = [(q^*\omega,\Hol^*\eta)]\\
		& = [(\sigma +\varphi^*\varpi,d\varpi)] \\
		& = [(\sigma,0)+d(0,\varpi)] \\
		& = [(\sigma,0)]  \quad  \\
\end{align*}
\end{proof}


\end{document}